\newtheorem{theorem}{Theorem}[section]
\newtheorem{corollary}[theorem]{Corollary}
\newtheorem{definition}[theorem]{Definition}
\newtheorem{lemma}[theorem]{Lemma}
\newtheorem{proposition}[theorem]{Proposition}
\newtheorem{remark}[theorem]{Remark}
\newcommand{\vanish}[1]{}\parskip=12pt
\def\p{\prime}
\def\T{\mathcal{T}}
\def\U{\mathcal{U}}
\def\V{\mathcal{V}}
\def\F{\mathcal{F}}
\numberwithin{equation}{section}
\begin{document}

\title{The Braid Index Of Reduced Alternating Links}
\author{Yuanan Diao, G\'abor Hetyei and Pengyu Liu}
\address{Department of Mathematics and Statistics, UNC Charlotte,
    Charlotte, NC 28223}
\email{}
\subjclass[2010]{Primary: 57M25; Secondary: 57M27}
\keywords{knots, links, braid index, alternating, Seifert graph.}

\begin{abstract}
It is well known that the minimum crossing number of an alternating link equals the number of crossings in any reduced alternating link diagram of the link. This remarkable result is an application of the Jones polynomial. In the case of the braid index of an alternating link, Murasugi had conjectured that the number of Seifert circles in a  reduced alternating diagram of the link equals the braid index of the link. This conjecture turned out to be false. In this paper we prove the next best thing that one could hope for: we characterize exactly those alternating links for which their braid indices equal to the numbers of Seifert circles in their corresponding reduced alternating link diagrams. More specifically, we prove that if $D$ is a reduced alternating link diagram of an alternating link $L$, then $b(L)$, the braid index of $L$, equals the number of Seifert circles in $D$ if and only if $G_S(D)$ contains no edges of weight one. Here $G_S(D)$, called the Seifert graph of $D$, is an edge weighted simple graph obtained from $D$ by identifying each Seifert circle of $D$ as a vertex of $G_S(D)$ such that two vertices in $G_S(D)$ are connected by an edge if and only if the two corresponding Seifert circles share crossings between them in $D$ and that the weight of the edge is the number of crossings between the two Seifert circles. This result is partly based on the well known MFW inequality, which states that the $a$-span of the HOMFLY polynomial of $L$ is a lower bound of $2b(L)-2$, as well as a result due to Yamada, which states that the minimum number of Seifert circles over all link diagrams of $L$ equals $b(L)$. 
\end{abstract}

\maketitle
\section{Introduction}

\medskip
In 1900, Tait made a few famous conjectures in knot theory \cite{Tait}, one of which states that reduced alternating diagrams have minimal link crossing number, that is, if a link $L$ admits a projection diagram $D$ that is reduced and alternating, then the number of crossings in $D$ is the minimum number of crossings over all projections of $L$. This conjecture remained open for nearly a century until the great discovery of the Jones polynomial \cite{Jo}. An important property of the Jones polynomial is that its span (namely the difference between the highest power and the lowest power in the polynomial) is always less than or equal to the number of crossings in the knot diagram used for its calculation. Using this property, Kauffman (1987, \cite{K}), Murasugi (1987, \cite{M1}) and Thistlethwaite (1987, \cite{T}) independently proved this conjecture by establishing an equality between the span of the Jones polynomial of a reduced alternating link diagram and the number of crossings in the diagram. This is one of the greatest applications of the Jones polynomial.

This paper deals with a different link invariant, namely the {\em braid index} of an oriented link (all links in this paper are oriented links hence from now on we will only use the term links). It is well known that any link can be represented by the closure of a braid. The minimum  number of strands needed in a braid whose closure represents a given link is called the 
braid index of the link. In the case of braid index, the HOMFLY polynomial (a polynomial of two variables $z$ and $a$ that generalizes the Jones polynomial \cite{Fr, Pr}) plays a role similar to that of the Jones polynomial to the minimal crossing number. In ~\cite{Mo}, H.\ Morton showed that the
number of Seifert circles of a link $L$ is bounded from below by $a$-span$/2+1$ (which is called the Morton-Frank-Williams inequality, or MFW inequality
for short). Combining this with a result due to Yamada which states the braid index of an oriented link $L$ equals the minimum number of Seifert circles of all link
diagrams of $L$~\cite{Ya}, it follows that the braid index of $L$ is bounded from below by $a$-span$/2+1$. In analogy to the crossing number conjecture for a reduced alternating link diagram, it would be natural for one to hope that equality in the MFW inequality would hold for reduced alternating links. This was indeed conjectured by Murasugi in \cite{Mu}. This conjecture turned out to be false and counter examples are plentiful. The simplest knot that serves as a counter example is $5_2$: its minimum diagram has 4 Seifert circles but its $a$-span is only 4 (so $a$-span$/2+1=3<4$). In fact the braid index of $5_2$ is also 3, not 4.
Subsequent research then focused on identifying specific link classes for which the equality in the MFW inequality holds. Examples include the closed positive braids with a full twist (in particular the torus links)~\cite{FW}, the 2-bridge links and fibered alternating links~\cite{Mu}, and a new class of links discussed in a more recent paper \cite{Lee}. For more readings on related topics, interested readers can refer to \cite{Bir, Crom, El, MS, Na1, Sto}.

The main result of this paper is the complete characterization of those reduced alternating links for which the numbers of Seifert circles in their corresponding link diagrams equal to their braid indices. More specifically, let $D$ be a reduced alternating link diagram of a link $L$, define a simple and edge weighted graph, called the {\em Seifert graph} and denoted by $G_S(D)$, as follows. Every Seifert circle corresponds to a vertex in $G_S(D)$. Two vertices in $G_S(D)$ are connected by an edge if (and only if) the two corresponding Seifert circles in $D$ share crossings. The weight of an edge in $G_S(D)$ is the number of crossings between the two Seifert circles corresponding to the two vertices in $G_S(D)$. Our main result is stated in terms of $G_S(D)$ in a very simple way:

\begin{theorem}\label{main}
Let $L$ be a link with a reduced alternating diagram $D$, then the braid index of $L$ equals the number of Seifert circles in $D$ if and only if $G_S(D)$ is free of edges of weight one.
\end{theorem}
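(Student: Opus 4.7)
The proof naturally divides into two implications. By Yamada's theorem, the braid index $b(L)$ equals the minimum number of Seifert circles over all diagrams of $L$, so $b(L)\le s(D)$ automatically holds (where $s(D)$ denotes the number of Seifert circles in $D$); the task is therefore to decide exactly when equality is attained.

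For the ``only if'' direction I would prove the contrapositive: if $G_S(D)$ has an edge of weight one, then there exists a diagram $D'$ of the same link $L$ with $s(D')<s(D)$, which by Yamada's theorem forces $b(L)<s(D)$. Let $C_1,C_2$ be the two Seifert circles joined by the unique crossing $c$ corresponding to this weight-one edge. The local picture near $c$ consists of one arc of $C_1$ and one arc of $C_2$ meeting transversally, and because $D$ is reduced, $c$ is not nugatory. The plan is to exhibit an explicit diagrammatic move --- essentially a rotation (flype) of one of the two circles around $c$ --- that is an ambient isotopy of $L$ but which reorganizes the Seifert resolution so that $C_1$ and $C_2$ merge into a single Seifert circle in the new diagram. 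The reducedness of $D$ together with the weight-one condition are used to ensure that the move does not inadvertently split any other Seifert circle, so that the net change is $s(D')=s(D)-1$.

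For the ``if'' direction I plan to invoke the MFW inequality $b(L)\ge \frac{1}{2}(a\text{-span of }P_L)+1$ together with Morton's upper bound $a\text{-span}(P_L)\le 2(s(D)-1)$, and to show that when $G_S(D)$ has no edges of weight one, the $a$-span of the HOMFLY polynomial attains its maximum possible value $2(s(D)-1)$. Combined with the already known inequality $b(L)\le s(D)$, this forces equality. The core of this direction is a skein-theoretic computation. I would proceed by induction on the number of crossings of $D$, applying the HOMFLY skein relation at a carefully chosen crossing and tracking only the contributions to the extreme $a$-degree terms from the resulting subdiagrams. The combinatorial input is that, in the absence of weight-one edges, every edge of $G_S(D)$ has weight at least two, so when the skein relation expresses $P_L$ as a combination of HOMFLY polynomials coming from simpler alternating diagrams, the leading and trailing $a$-coefficients combine additively rather than cancelling.

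The main obstacle I anticipate is the sufficiency direction. Bounding the $a$-span from below requires more than Morton's original inequality: one needs to control both extreme coefficients of the HOMFLY polynomial, and keeping the induction closed most likely demands a strengthened hypothesis that computes these coefficients explicitly as polynomials in $z$ in terms of combinatorial data read off from $G_S(D)$. The weight-one-free hypothesis should translate into a non-vanishing statement for these coefficients, and the challenge will be to isolate the right invariant of the Seifert graph and to verify that each skein step preserves the structural properties needed to conclude non-cancellation. I expect that edges of weight exactly two in $G_S(D)$ will sit at the threshold of cancellation and will demand the most delicate bookkeeping.
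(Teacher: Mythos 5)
Your overall architecture is the right one (Yamada gives $b(L)\le s(D)$; a diagrammatic reduction handles the weight-one case; sharpness of the MFW bound handles the other direction), and your ``only if'' direction is essentially the paper's argument, though your move is better described as a \emph{rerouting} than a flype: the paper takes the overpass of the single crossing between $C'$ and $C''$ and reroutes it over the diagram along the other Seifert circles adjacent to $C''$, which merges $C'$ and $C''$ into one Seifert circle while leaving all other Seifert circles intact (only edge weights in the Seifert graph change). Note that this step needs neither reducedness nor alternation; reducedness only guarantees the crossing is not nugatory, and the nugatory case is trivial anyway.

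The genuine gap is in the sufficiency direction, and you have in effect flagged it yourself. An induction on crossing number via the skein relation does not close: applying the skein relation at a crossing of an alternating diagram produces one smoothed diagram (still alternating, but possibly acquiring weight-one edges or isolated circles) and one crossing-switched diagram that is \emph{no longer alternating}, so the inductive hypothesis cannot be applied to it, and you have not identified the ``strengthened hypothesis'' that would control its extreme $a$-coefficients. The paper circumvents this entirely: instead of a one-step skein induction it builds full resolving trees $\T^{\pm}(D)$ (so every leaf is a trivial link and contributes an explicit monomial), using a local braid-like ``castle'' structure on the Seifert circles to choose which crossings to switch or smooth. The non-cancellation you are worried about is then established by showing (i) every leaf contributes $a$-degree at most $n-w(D)-1$ (resp.\ at least $-n-w(D)+1$), via a count of intermediate Seifert circles and loop crossings; (ii) the leaves achieving the extreme $a$-degree \emph{and} the maximal $z$-degree in its coefficient are exactly those obtained by smoothing all positive crossings and all but two negative crossings in each negative pair (resp.\ the mirror statement), and these all contribute with the same sign. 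The hypothesis that every edge has weight at least two is used precisely to guarantee such leaves exist (one cannot ``keep two crossings'' on an edge of weight one), not merely to prevent cancellation at weight-two edges. Without some replacement for this global resolving-tree analysis, your plan for the hard direction is a program rather than a proof.
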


The proof of the theorem contains two parts. In the easier part, we show that if the link diagram $D$ of a link $L$ contains an edge of weight one, then the braid index of $L$ is less than the number of Seifert circles in $D$. In this proof $D$ does not have to be reduced nor alternating. In the harder part of the proof, we show that if $D$ is reduced, alternating and $G_S(D)$ is free of edges of weight one, then the equality in the MFW inequality holds hence the number of Seifert circles in $D$ equals the braid index of $L$.

This paper is structured as follows. In Section~\ref{s2}, we introduce the HOMFLY
polynomial and the resolving trees. In
Section~\ref{s3}, we introduce an important new concept called the {\em intermediate Seifert circles} (IS circles for short) and the decomposition of a link diagram into the IS circles. In Section~\ref{s4}, we introduce another important concept called the {\em castle}. This is actually a structure that exists within any link diagram that is similar to a braid locally. Based on the castle structure, we then develop two algorithms that will provide us two special resolving trees which will play a crucial role in proving our main theorem. The proof of our main theorem is given in Section~\ref{s5}. We end the paper with some remarks and observations in Section~\ref{s6}.

\section{HOMFLY polynomial and resolving trees}\label{s2}

For the sake of convenience, from this point on, when we talk about a
link diagram $D$, it is with the understanding that it is the link
diagram of some link $L$. Since we will be talking about the link
invariant such as braid index and the HOMFLY polynomial, it is safe for
us to use $D$ as a link without mentioning $L$. Let $D_+$, $D_-$, and
$D_0$ be oriented link diagrams of a link $L$ that coincide except at a
small neighborhood of a crossing where the diagrams are presented as in
Figure~\ref{fig:cross}. We say the crossing presented in $D_+$ has a
{\em positive} sign and the crossing presented in $D_-$ has a {\em negative} sign. The following result appears in~\cite{Fr,Ja}.

\begin{proposition}\label{Ho}
There is a unique function that maps each oriented link diagram $D$ to a two-variable Laurent polynomial with integer coefficients $P(D,z,a)$ such that 
\begin{enumerate}
\item If $D_1$ and $D_2$ are ambient isotopic, then $P(D_1,z,a)=P(D_2,z,a)$.
\item $aP(D_+,z,a) - a^{-1}P(D_-,z,a) = zP(D_0,z,a)$. 
\item If $D$ is an unknot, then $P(D,z,a)=1$. 
\end{enumerate}
\end{proposition}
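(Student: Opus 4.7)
The plan is to prove uniqueness by an explicit recursion on diagrams and to prove existence by checking that the recursion is consistent. For uniqueness, I would induct lexicographically on the pair $(n, b)$, where $n$ is the number of crossings of $D$ and $b$ is the number of \emph{bad} crossings after fixing a basepoint on each component and an ordering of the components: a crossing is bad if, tracing along the ordered components starting at the basepoints, the under-strand is traversed before the over-strand. If $b > 0$, pick any bad crossing and apply axiom (2); this expresses $P(D,z,a)$ in terms of $P(D',z,a)$, where $D'$ has $b-1$ bad crossings, and $P(D_0,z,a)$, which has $n-1$ crossings. If $b = 0$, the diagram is descending and is therefore ambient-isotopic to a $c$-component unlink, so by axiom (1) we may replace $D$ by a standard diagram of the unlink. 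The unlink value is then forced by a single application of axiom (2) at a positive kink introduced on one of the circles: since both the $D_+$ and $D_-$ resolutions yield unknot diagrams while the oriented smoothing of an R1 kink separates off a small circle, one obtains the recursion $P(k\text{ unknots},z,a) = ((a-a^{-1})/z)\cdot P((k-1)\text{ unknots},z,a)$, yielding the closed form $((a-a^{-1})/z)^{k-1}$. All arithmetic takes place in $\Z[z^{\pm1},a^{\pm1}]$, so integrality and the Laurent-polynomial structure are automatic.

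For existence, the challenge is to verify that the polynomial produced by the recursion is well-defined independent of all these choices and, ultimately, independent of the particular diagram chosen to represent a given link. By Reidemeister's theorem the latter reduces to invariance under the three Reidemeister moves. For each move, one would expand both sides of the claimed equality using the skein relation at carefully chosen crossings in the local picture, until both sides become the same $\Z[z^{\pm1},a^{\pm1}]$-linear combination of values on strictly simpler diagrams; the verification for R3 is the longest and splits into several subcases according to the signs of the three crossings involved. A cleaner organization, used in the original constructions \cite{Fr, Ja}, bundles this independence check together with the inductive definition of the polynomial, so that well-definedness is built in at every stage of the induction rather than verified after the fact.

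The hard part is the existence direction: the combinatorial consistency check for the recursion, or equivalently the Reidemeister invariance of the resulting polynomial, is delicate and is the reason the result was proved independently by several groups. For the purposes of this paper, where the proposition serves as foundational input for the HOMFLY-based arguments of later sections, it suffices to invoke \cite{Fr, Ja} for the existence half and to treat the explicit resolving-tree recursion described above as the working computational device for the HOMFLY calculations in what follows.
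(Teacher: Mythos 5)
The paper does not actually prove this proposition: it is quoted directly from the literature, with the sentence ``The following result appears in~\cite{Fr,Ja}'' serving as the entire justification, and only the unlink value $((a-a^{-1})z^{-1})^{n-1}$ is derived afterwards by the same kink-and-skein computation you describe. Your sketch — uniqueness by lexicographic induction on crossing number and bad crossings, the forced unlink values, and deferral of the genuinely hard existence/consistency check to \cite{Fr,Ja} — is the standard correct treatment and is fully consistent with how the paper uses the result.
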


\begin{figure}[htb!]
\includegraphics[scale=.6]{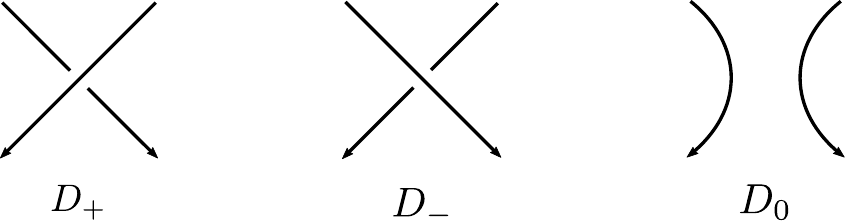}
\caption{The sign convention at a crossing of an oriented link and the splitting of the crossing: the crossing in $D_+$ ($D_-$) is positive (negative) and is assigned $+1$ ($-1$) in the calculation of the writhe of the link diagram.}
\label{fig:cross}
\end{figure}

The Laurent polynomial $P(D,z,a)$ is called the {\em HOMFLY polynomial}
of the oriented link $D$. The second condition in the proposition is
called the {\em skein relation} of the HOMFLY polynomial. With conditions (2) and (3) above, one can easily show that if  $D$ is a trivial link with $n$ connected components, then $P(D,z,a)=((a-a^{-1})z^{-1})^{n-1}$ (by applying these two conditions repeatedly to a simple closed curve with $n-1$ twists in its projection).
For our purposes, we will actually be using the following two equivalent forms of the skein relation:
\begin{eqnarray}
P(D_+,z,a)&=&a^{-2}P(D_-,z,a)+a^{-1}zP(D_0,z,a),\label{Skein1}\\
P(D_-,z,a)&=&a^2 P(D_+,z,a)-azP(D_0,z,a).\label{Skein2}
\end{eqnarray}

A rooted and edge-weighted binary tree $\T$ is called a {\em resolving tree} of an oriented link diagram $D$ (for the HOMFLY polynomial)
if the following conditions hold. First, every vertex of $\T$ corresponds to an oriented link diagram. Second, the root vertex of $\T$ corresponds to the original link diagram $D$. Third, each leaf vertex of $\T$ corresponds to a trivial link. Fourth, if we direct $\T$ using the directions of the paths from the root vertex to the leaf vertices, then under this direction any internal vertex has exactly two children vertices and the corresponding link diagrams of these three vertices are identical except at one crossing and they are related by one of the two possible relations at that crossing as shown in Figure~\ref{fig:rtree}, where the edges are weighted and the directions of the edges coincide with the direction of $\T$.

\begin{figure}[htb!]
\includegraphics[scale=.35]{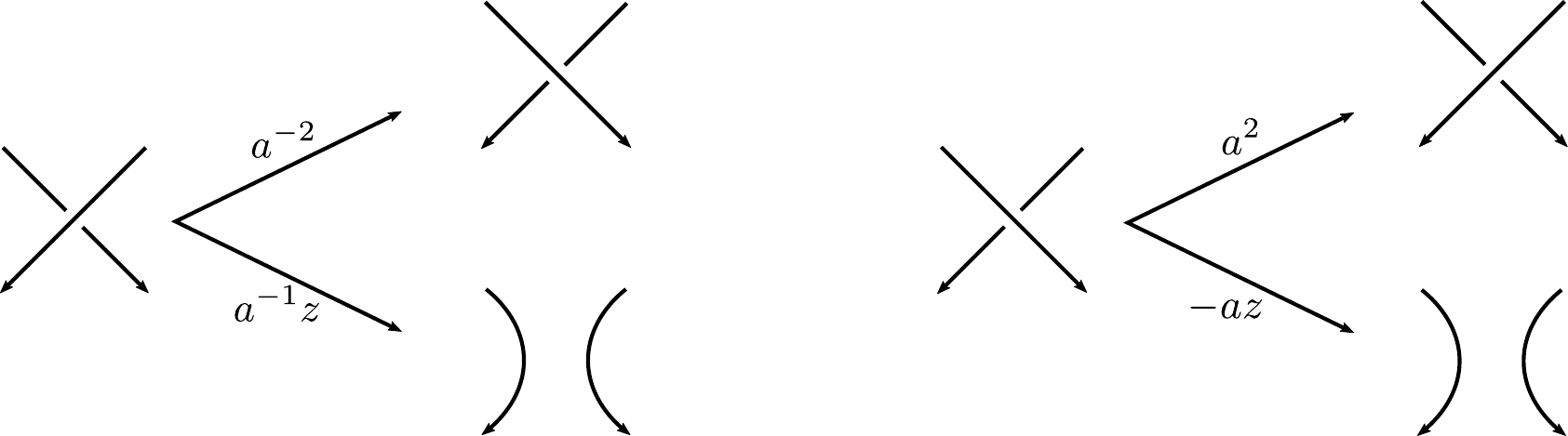}
\caption{A pictorial view of how the edge weights are assigned by the skein relations (\ref{Skein1}) and (\ref{Skein2}) to the edges connecting an internal vertex to the two vertices that it precedes in the resolving tree.}
\label{fig:rtree}
\end{figure}

If $D$ admits a resolving tree $\T$, then one can easily show that $P(D,z,a)$ is a summation in which each leaf vertex of $\T$ contributes exactly one term in the following way. Let $\U$ be the trivial link corresponding to a leaf vertex in $\T$ and let $Q$ be the unique path from the root $D$ to the leaf vertex $\U$. Then the contribution of the leaf vertex is simply $((a-a^{-1})z^{-1})^{\gamma(\U)-1}$ multiplied by the weights of the edges in $Q$, where $\gamma(\U)$ is the number of components in $\U$. Let  $w(\U)$ be the writhe of $\U$, $\gamma(\U)$ be the number of components in ${\U}$, $t(\U)$ be the number of smoothed crossings (in $D$ in order to obtain $\U$) and $t^{-}(\U)$ be the number of smoothed crossings that are negative. As shown in Figure~\ref{fig:rtree}, the degree of $a$ in the weight of an edge is exactly the change of writhe from the starting vertex of the edge (remember that it is directed from the root to the leaf) to the ending vertex of the edge, whereas a $z$ term in the weight of the edge indicates that the ending vertex is obtained from the starting vertex by a crossing smoothing and a negative sign in the weight indicates that the smoothed crossing is a negative crossing. It follows that the total contribution of  $\U$ in $P(D,z,a)$ is

\begin{eqnarray}
&&(-1)^{t^{-}(\U)}z^{t(\U)}a^{w(\U)-w({D})}((a-a^{-1})z^{-1})^{\gamma(\U)-1},
\end{eqnarray}
hence 
\begin{eqnarray}
P(D,z,a)&=& \sum_{\U\in \T}(-1)^{t^{-}(\U)}z^{t(\U)}a^{w(\U)-w({D})}((a-a^{-1})z^{-1})^{\gamma(\U)-1}.
\end{eqnarray}

It follows that the highest and lowest $a$-power terms that $\U$ contributes to $P(D,z,a)$ are
\begin{eqnarray}\label{apower1}
&&(-1)^{t^{-}(\U)}z^{t(\U)-\gamma(\U)+1}a^{w(\U)-w({D})+\gamma(\U)-1}
\end{eqnarray}
and 
\begin{eqnarray}\label{apower2}
&&(-1)^{t^{-}(\U)+\gamma(\U)-1}z^{t(\U)-\gamma(\U)+1}a^{w(\U)-w({D})-\gamma(\U)+1}
\end{eqnarray}
respectively.

It is well known that resolving trees exist for any given oriented link diagram $D$. We will describe two algorithms for constructing resolving trees with some special properties that we need at the end of Section \ref{s4}.

\section{Seifert graphs and intermediate Seifert circle decompositions}\label{s3}

For the purpose of this section, there is no need for us to specify the over and under pass in a link diagram $D$. Thus in this section $D$ is treated as a collection of oriented plane closed curves that may intersect each other and may have self intersections, with the intersection points being the crossings in the link diagram. So we will be using the term ``crossing" and ``intersection" interchangeably.

\begin{definition}{\em
Let $D$ be a link diagram and $S$ be its Seifert circle decomposition. We construct a graph $G_S(D)$ from $S$ by identifying each Seifert circle in $S$ as a vertex of $G_S(D)$. If there exist $k\ge 1$ crossings between two Seifert circles $C_1$ and $C_2$ in $S$, then the two corresponding vertices (also named by $C_1$ and $C_2$) are connected by an edge with weight $k$. Otherwise there will be no edges between the two vertices. The edge-weighted (simple) graph $G_S(D)$ is called the {\em Seifert graph} of $D$.}
\end{definition}

\begin{definition}{\em
Let $D$ be a link diagram. A simple closed curve obtained from $D$ by smoothing some vertices in $D$ is called an {\em intermediate Seifert circle} (which we will call an IS circle for short). A decomposition of $D$ into a collection of IS circles is called an {\em IS decomposition} of $D$ and is denoted by $I(D)$. The number of IS circles in $I(D)$ is denoted by $s(I(D))$. In particular, if $I(D)$ is the Seifert circle decomposition of $D$, then $s(I(D))=s(D)$ is the number of Seifert circles in $D$.}
\end{definition}

Note: The IS circles may be linked to each other, but this is not important for the purpose of this section and we will treat them as plane curves that may intersect each other.

\begin{definition}\label{loop}{\em 
Let $D$ be a link diagram. For a given point $p$ on $D$ that
is not on a crossing, consider the component of $D$ that contains $p$.  Let us travel along this component starting from $p$ following the orientation of the component. As we travel we ignore the crossings that we encounter the first time. Eventually we will arrive at the first crossing that we have already visited (which is in fact the first crossing involving strands of this component). We call this crossing a {\em loop crossing}, since smoothing it results in two curves: the part that we had traveled between the two visits to this loop crossing (which is an IS circle since it does not contain crossings in itself, and it does not contain $p$), and the other part that still contains $p$, which is now a new component in the new link diagram obtained after smoothing the loop crossing.  For this new link component that contains $p$, we will start from $p$ and continue this process and obtain new IS circles. This process ends when the new link component containing $p$ is itself an IS circle hence traveling along it will not create any new loop crossings. Thus, by smoothing the loop crossings defined (encountered) this way, we can decompose the component of $D$ that contains $p$ into a collection of IS circles. By choosing a point on each component of $D$ and repeat the above process, we obtain an IS circle decomposition of $D$. }
\end{definition}

\begin{figure}[htb!]
\includegraphics[scale=.3]{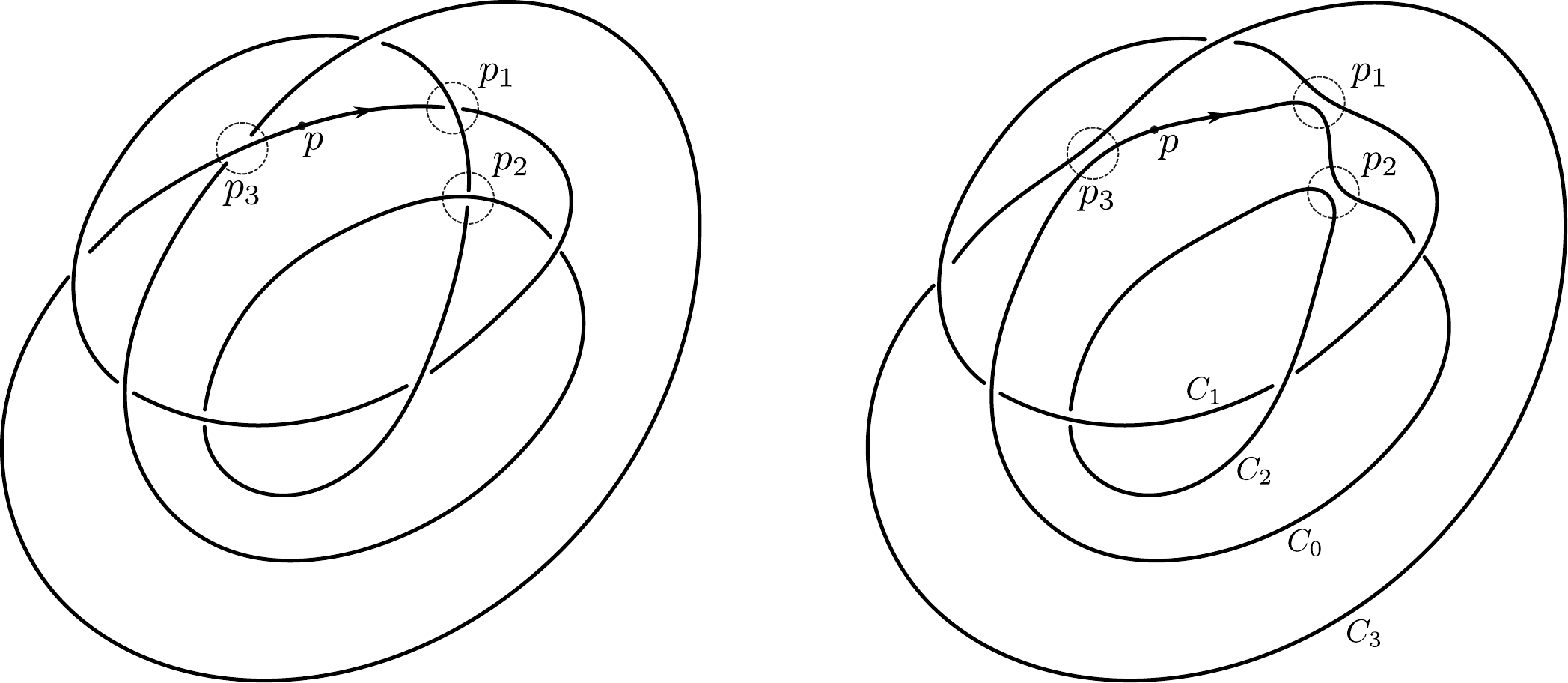}
\caption{The dot marks the starting point $p$ on $D$. $p_1$, $p_2$ and $p_3$ are the loop crossings obtained in that order. In this case $D$ is decomposed into four IS circles $C_0$, $C_1$, $C_2$ and $C_3$ as shown.}
\label{fig:loop}
\end{figure}

\begin{remark}\label{re_loop}{\em 
The loop crossings (hence the corresponding IS circles obtained by
smoothing them) on a link component are uniquely determined by $p$. See
Figure \ref{fig:loop} for an example. However, a different choice of $p$ may result in different loop crossings and different IS circle decompositions of $D$, and certainly there are other ways to obtain different IS circle decompositions of $D$.}
\end{remark}

\begin{remark}\label{re_loop2}{\em 
Let $I(D)$ be an IS circle decomposition of $D$. If $I(D)$ contains crossings (so it is not the Seifert circle decomposition of $D$) then we can obtain a new IS circle decomposition $I^\p(D)$ with less crossings from $I(D)$ by the following operation. Let $\tilde{C}_1$ and $\tilde{C}_2$ be two IS circles in $I(D)$ that intersect each other.  They must intersect each other an even number of times. Consider two  crossings between $\tilde{C}_1$ and $\tilde{C}_2$ that are consecutive as we travel along either $\tilde{C}_1$ or $\tilde{C}_2$. Smoothing these crossings results in two closed curves with two possible cases: (1) These closed curves are themselves IS circles. In this case, replacing $\tilde{C}_1$ and $\tilde{C}_2$ in $I(D)$ by these two new IS circles results in the new IS circle decomposition $I^\p(D)$ with $s(I(D))=s(I^\p(D))$. If this is the case we say that $I(D)$ is reducible and the new IS circle decomposition is said to be obtained from $I(D)$ by a {\em reduction operation}. (2) These closed curves contain self intersection points hence loop crossings. If we smooth the loop crossings, then we obtain at least 3 IS circles. Thus replacing $\tilde{C}_1$ and $\tilde{C}_2$ in $I(D)$ by these new IS circles results in the new IS circle decomposition $I^\p(D)$ with $s(I(D))<s(I^\p(D))$.  Since each operation leads to a new IS circle decomposition without decreasing the number of IS circles and smoothes at least two crossings, the following lemma is immediate. 
}
\end{remark}

\begin{lemma}\label{new_lemma2}
Let $I_1(D)$ be any IS decomposition of $D$ and $I_2(D)$ be an IS decomposition of $D$ obtained from $I_1(D)$ by a sequence of operations as described in Remark \ref{re_loop2},  then the number of IS circles in $I_1(D)$ is less than or equal to the number of IS circles in $I_2(D)$. In particular, the number of IS circles in $I_1(D)$ is less than or equal to the number of Seifert circles in $D$.
\end{lemma}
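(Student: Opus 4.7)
The proof will proceed by induction on $|Y| = |M_2 \setminus M_1|$, the number of crossings that are smoothed in $I_2$ but not in $I_1$. The base case $|Y|=0$ is trivial. A crucial preliminary observation comes from Remark \ref{re_loop2}: any two IS circles share an even number of crossings. Since every crossing in $Y$ joins two distinct IS circles of $I_1$, this parity implies that smoothing just a single crossing of $I_1$ cannot yield another IS decomposition---the remaining odd number of crossings between the two merged curves would appear as unresolved self-crossings. Hence every transition between IS decompositions must smooth crossings in ``chunks'' of two or more.

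For the inductive step, my plan is to find a nonempty proper subset $Z \subsetneq Y$ such that $I' = I_1$ with the additional crossings in $Z$ smoothed is itself a valid IS decomposition satisfying $n(I') \geq n(I_1)$. Applying the inductive hypothesis to the pair $(I', I_2)$ (with $|Y|-|Z| < |Y|$ extra smoothings) will then yield $n(I_2) \geq n(I') \geq n(I_1)$. The candidates for $Z$ come from the multigraph $G_Y$ whose vertex set is the IS circles of $I_1$ and whose edges are the crossings in $Y$: taking $Z$ to be the set of $Y$-edges between a pair $(C_i, C_j)$ whose entire set of common crossings already lies in $Y$ works cleanly. A parity argument using Remark \ref{re_loop2} and the elementary fact that each single smoothing changes the cycle count by $\pm 1$ shows that smoothing $Z$ replaces the two IS circles $C_i, C_j$ by at least two new simple closed curves, so $n(I') \geq n(I_1)$.

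The main obstacle is the edge case in which no such proper $Z$ with the requisite completeness property exists---for instance, when every pair of IS circles of $I_1$ with a crossing in $Y$ also shares some crossing outside $Y$, so that the only valid IS extension of $I_1$ obtainable from within $Y$ is $I_2$ itself. Here one must argue directly, for which I would use the Seifert-like surface $\Sigma_I$ associated to each IS decomposition (built from disks bounded by IS circles together with twisted bands at the unsmoothed crossings), whose Euler characteristic is $\chi(\Sigma_I) = n(I) - b(I)$ where $b(I)$ denotes the number of unsmoothed crossings. Writing $\chi(\Sigma_I) = 2k(\Sigma_I) - 2g(\Sigma_I) - r(\Sigma_I)$ and carefully tracking how the components, genus, and boundary components of $\Sigma_I$ change from $I_1$ to $I_2$ (using the IS condition that each unsmoothed crossing is between distinct IS circles) will yield the desired inequality. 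Finally, the ``in particular'' statement is an immediate corollary: taking $I_2$ to be the Seifert decomposition of $D$ (which smooths every crossing and is always a valid IS decomposition extending any $I_1$) gives $n(I_1) \leq s(D)$.
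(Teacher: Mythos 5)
Your main inductive step is sound as far as it goes: when there is a pair $(C_i,C_j)$ \emph{all} of whose common crossings lie in $Y$, smoothing that (even-sized) set does produce a valid intermediate IS decomposition with at least as many circles, and the induction closes. The problem is that the ``edge case'' you flag is not a rare degeneracy but the generic situation --- for instance $I_2$ may be obtained from $I_1$ by smoothing two of the four crossings shared by a single pair of IS circles, leaving the other two as crossings between the two resulting curves; then no admissible $Z$ exists and $Y$ itself cannot be split into sub-steps that are IS decompositions. Your fallback for this case is only a plan, and the plan rests on shaky ground: the ``Seifert-like surface'' built from disks bounded by IS circles is not the standard construction, because IS circles, unlike Seifert circles, may intersect one another, so the disks overlap and the identification of $r(\Sigma_I)$ with anything computable, as well as the comparison of $k$ and $g$ between $\Sigma_{I_1}$ and $\Sigma_{I_2}$ (which are built on entirely different disk systems, so one is not obtained from the other by deleting bands), is exactly the content that would need to be proved. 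As written, the statement is not established in the case that actually matters.

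The paper's proof avoids this by dropping your requirement that every intermediate stage be an IS decomposition. It fixes the set $\Omega=Y$ and chooses an order of smoothing under which the curve count never decreases: first it repeatedly smooths \emph{pairs} of $\Omega$-crossings shared by two closed curves (two curves become two curves), then it smooths the self-crossings in $\Omega$ (each such smoothing splits a curve, strictly increasing the count). At that point it argues that no $\Omega$-crossings can remain between distinct curves: by Remark \ref{re_loop2} two closed curves meet an even number of times, so a leftover single $\Omega$-crossing between two curves would, upon smoothing, merge them and convert at least one non-$\Omega$ crossing into a permanent self-crossing, contradicting the fact that $I_2(D)$ consists of simple closed curves. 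This ordering-plus-parity argument is the idea your proposal is missing; with it, the intermediate collections are allowed to contain self-crossings and the troublesome case dissolves. If you want to salvage your induction, you would need to induct on a weaker invariant (collections of closed curves with a marked set of crossings still to be smoothed) rather than on IS decompositions.
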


From this point on, we will be using figures that contain Seifert circles to provide examples and to explain our ideas in the proofs. To avoid possible confusion, let us point out how to read our drawings. Although we always draw the Seifert circles as closed curves (mainly for the sake of convenience), when we reconstruct the original link diagram from them, it is important to remember that parts of the Seifert circles are not on the diagram as shown in Figure \ref{Figure_extra}.

\begin{figure}[htb!]
\includegraphics[scale=.4]{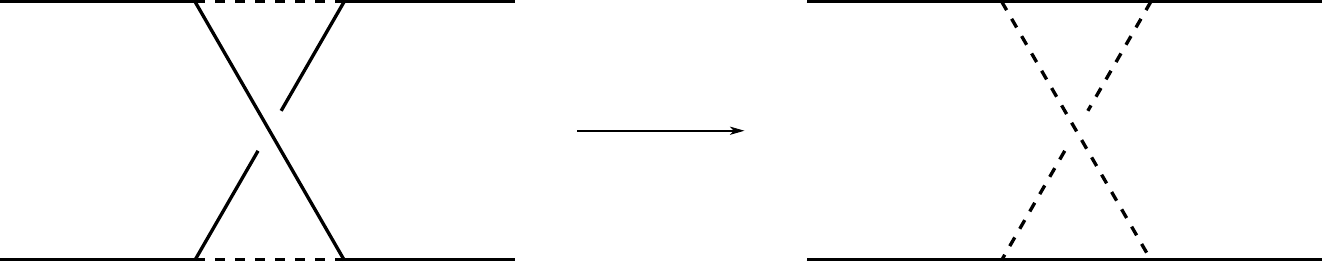}
\caption{Left: The link diagram with the crossing being part of it; Right: The Seifert circles after the crossing is smoothed.}
\label{Figure_extra}
\end{figure}

In an IS decomposition $I(D)$ of $D$, notice that if an IS circle $\tilde{C}$ has no crossings with any other IS circles, then it is itself a Seifert circle. If $\tilde{C}$ has crossings with other IS circles, then these crossings divide $\tilde{C}$ into arcs. As we travel along $\tilde{C}$, we travel along these arcs (called the {\em dividing arcs of $\tilde{C}$}), say $\tau_1$, $\tau_2$, ..., $\tau_k$ (and $\tau_k$ connects back to $\tau_1$) in that order and $\tau_j$ belongs to Seifert circle $C_j$ ($1\le j\le k$), and obtain a directed and closed walk $C_1C_2\cdots C_kC_1$ in $G_S(D)$. We call this closed walk the {\em Seifert circle walk} of $\tilde{C}$. Let $C_1^{\p}C_2^{\p}\cdots C_m^{\p}C_1^{\p}$ be a cycle in $G_S(D)$, {\em i.e.}, $C_i^{\p}\not=C_j^{\p}$ if $i\not=j$ and $C_j^{\p}$ shares crossings with $C_{j+1}^{\p}$ ($C_{m+1}^{\p}=C_1^{\p}$) in $D$. It is necessary that $m$ is even and $m\ge 4$. Furthermore, Seifert circles in a cycle cannot cannot be concentric to each other with only one possible exception in which one Seifert circle in the cycle contains all other Seifert circles. Because of this, a cycle in $G_S(D)$ bounds a region in the plane if we consider $G_S(D)$ as a plane graph. A cycle $G_S(D)$ is said to be {\em inner most} if there are no other cycles in the region that it bounds. We say that $C_1C_2\cdots C_kC_1$ contains a cycle of $G_S(D)$ if there exists a cycle $C_1^{\p}C_2^{\p}\cdots C_m^{\p}C_1^{\p}$ in $G_S(D)$ such that $\tilde{C}$ passes through $C_{j}^{\p}C_{j+1}^{\p}$ ($1\le j\le m$) in that order.

\begin{lemma}\label{lemma4}
If an IS decomposition $I(D)$ of $D$ contains an IS circle whose Seifert circle walk contains a cycle of $G_S(D)$, then the number of IS circles in $I(D)$ is strictly less than $n$, the number of Seifert circles in $D$. 
\end{lemma}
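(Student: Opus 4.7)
I would apply Lemma~\ref{new_lemma2} with $I_1(D) = I(D)$ and $I_2(D)$ the Seifert decomposition of $D$, which immediately gives $|I(D)| \le n$, and then sharpen this to a strict inequality using the cycle hypothesis. Recall that the proof of Lemma~\ref{new_lemma2} passes from $I(D)$ to the Seifert decomposition in two stages. First it iteratively performs pair-smoothings---smoothing two crossings of $\Omega$ shared between the same pair of curves, each step preserving the total count---arriving at an intermediate collection $\mathcal{C}$ with $|\mathcal{C}| = |I(D)|$; then it smooths each remaining self-crossing of $\mathcal{C}$ that lies in $\Omega$, and each such smoothing adds $+1$ to the count by Remark~\ref{re_loop2}. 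Thus, to obtain $|I(D)| < n$ it suffices to exhibit at least one self-crossing of $\mathcal{C}$ in $\Omega$.

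\textbf{Setup and main argument.} Suppose, after restricting to a minimal cycle contained in the walk and relabeling, that the cycle appears as a contiguous sub-walk $C_{j+1}C_{j+2}\cdots C_{j+m}C_{j+1}$ of length $m\ge 3$ with distinct Seifert circles $C_{j+1},\ldots,C_{j+m}$, and let $q_{j+1},\ldots,q_{j+m}$ be the corresponding $m$ non-smoothed crossings on $C$. At each $q_{j+\ell}$, $C$'s strand meets another IS circle $D_\ell$ of $I(D)$, and the Seifert smoothing at $q_{j+\ell}$ reroutes the end of $\tau_{j+\ell}$ through an arc of $D_\ell$ lying in $C_{j+\ell}$, while an arc of $D_\ell$ lying in $C_{j+\ell+1}$ is rerouted into the start of $\tau_{j+\ell+1}$. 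Carrying out these $m$ Seifert smoothings in order traces $C$'s strand along a closed topological cycle through $C_{j+1},C_{j+2},\ldots,C_{j+m}$ and back to $C_{j+1}$, a bona fide cycle of length $m\ge 3$ in $G_S(D)$. Since pair-smoothings act only on crossings between two curves at a time (a tree-type local resolution), they cannot eliminate the nontrivial first Betti number contributed by this cycle; after the pair-smoothing stage, $\mathcal{C}$ must therefore carry at least one self-crossing of some curve in $\Omega$. Smoothing it yields the required $+1$, so $n\ge |I(D)|+1$.

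\textbf{Main obstacle.} The technically delicate step is rigorously justifying that pair-smoothings cannot absorb the cycle's topological content without leaving a self-crossing. I plan to do this by tracking a combinatorial invariant of the evolving curve complex during the pair-smoothing stage---something like the first Betti number of the walk-support subgraph of $G_S(D)$ attributable to curves descending from $C$---that is preserved by pair smoothings (which merely swap arcs between two curves and do not alter the edge-cycle content of the walks) but vanishes at the Seifert decomposition, forcing at least one self-crossing smoothing to make up the difference. Subtleties to handle include the possibility that the $D_\ell$ coincide in various patterns (opening up additional pair-smoothing opportunities), that the cycle might not be contiguous in the walk (requiring the choice of a minimal cycle), and that non-cycle crossings of $\Omega$ may interleave with the cycle crossings in the pair-smoothing process. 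All of these can be addressed by combining Remark~\ref{re_loop2} (which splits smoothings into merges on crossings between distinct curves and splits on self-crossings) with the elementary fact that two simple closed plane curves meet transversely an even number of times.
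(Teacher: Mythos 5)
Your overall strategy---apply Lemma~\ref{new_lemma2} and then force a strict increase by showing that at least one \emph{split} (self-crossing smoothing) must occur on the way from $I(D)$ to the Seifert decomposition---is a legitimate reformulation of the lemma, but the crux of your argument is exactly the step you defer to a ``plan,'' and the invariant you propose does not do the job. A pair-smoothing removes two crossings between curves $A$ and $B$, and these two crossings may realize two \emph{different} edges of $G_S(D)$; if one of them was the last remaining crossing realizing an edge of your cycle among the descendants of $C$, that edge simply disappears from the walk-support subgraph and its first Betti number drops, with no self-crossing produced at that step. So ``pair-smoothings preserve the edge-cycle content of the walks'' is false as stated, and the claim that a self-crossing in $\Omega$ must survive the pair-smoothing stage is left unproved. (That claim is in fact equivalent to the lemma itself, so you cannot appeal to it; you would need a genuinely different invariant, e.g.\ an Euler-characteristic count on the underlying $4$-valent graph, and none is supplied.)

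The paper avoids all of this bookkeeping with a one-move argument that you may want to compare against. Given the cycle $C_1C_2\cdots C_kC_1$ in the Seifert circle walk of $C$, pick for each consecutive pair $C_i,C_{i+1}$ one crossing $c_i$ of $D$ that is \emph{not} smoothed in $I(D)$ and realizes that step of the walk, and let $I'(D)$ be obtained from $I(D)$ by smoothing \emph{every} crossing of $I(D)$ except $c_1,\dots,c_k$. The result is an honest IS decomposition consisting of all $n-k$ Seifert circles other than $C_1,\dots,C_k$, together with exactly two simple closed curves formed by the $k$ circles $C_1,\dots,C_k$ joined cyclically at $c_1,\dots,c_k$ (Figure~\ref{figure2}); hence $I'(D)$ has $n-k+2\le n-1$ circles since $k\ge 3$. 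Since $I'(D)$ is obtained from $I(D)$ by smoothing crossings, Lemma~\ref{new_lemma2} gives $|I(D)|\le |I'(D)|<n$ directly, with no need to analyze the order or type of the individual smoothings. I recommend you adopt this route, or else replace your Betti-number heuristic with an invariant you can actually verify is monotone under both kinds of smoothing moves.
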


Proof. Let $\tilde{C}$ be an IS circle in $I(D)$ whose Seifert circle walk contains a cycle of $G_S(D)$. Let $D^\p$ be the diagram corresponding to $I(D)$ with $\tilde{C}$ removed. The set of IS circles $I(D)-\{\tilde{C}\}$ forms an IS circle decomposition of $D^\p$. Consider the diagram $D_1$ obtained from $D$ by smoothing all crossings not on $\tilde{C}$. This results in a new IS circle decomposition which we will denote by $I_{\tilde{C}}(D)$. The diagram $D_1$ can also be obtained by smoothing all crossings in $D^{\p}$ first and then adding $\tilde{C}$ back to it. Thus every IS circle in $D_1$ is a Seifert circle of $D^\p$ except $\tilde{C}$. By Lemma \ref{new_lemma2}, $s(I(D))-1=s(I(D)-\{\tilde{C}\})\le s(D^\p)=s(I_{\tilde{C}}(D))-1$, thus $s(I(D))\le s(I_{\tilde{C}}(D))\le s(D)$. Notice that smoothing crossings not on $\tilde{C}$ does not change the Seifert circle walk of $\tilde{C}$, and all crossings in $D_1$ are passed by $\tilde{C}$. 
We make the following additional observations.

Observation 1. If a consecutive sub-walk $C_1C_2\cdots C_mC_1$ in the Seifert circle walk of $\tilde{C}$ is a cycle in $G_S(D_1)$ ($m\ge 4$ and is even), then $C_2$ and $C_3$ can only share one crossing in $D_1$. This can be seen by traveling along the portion $\tau$ of $\tilde{C}$ corresponding to this sub-walk, see Figure \ref{cycle_limit}. The arcs $C_2-\tau$ and $C_3-\tau$ are in the two separate regions bounded by the simple closed curve defined by the union of $\tau$ and the arc $qp$ of $C_1$ as shown in Figure \ref{cycle_limit}, hence there can be no crossings between them by the Jordan close curve theorem. Notice that this observation is not affected if one of these Seifert circles contains the rest in its interior (in which case we may reroute a segment on one of the Seifert circles in an obvious manner).

\begin{figure}[htb!]
\includegraphics[scale=.45]{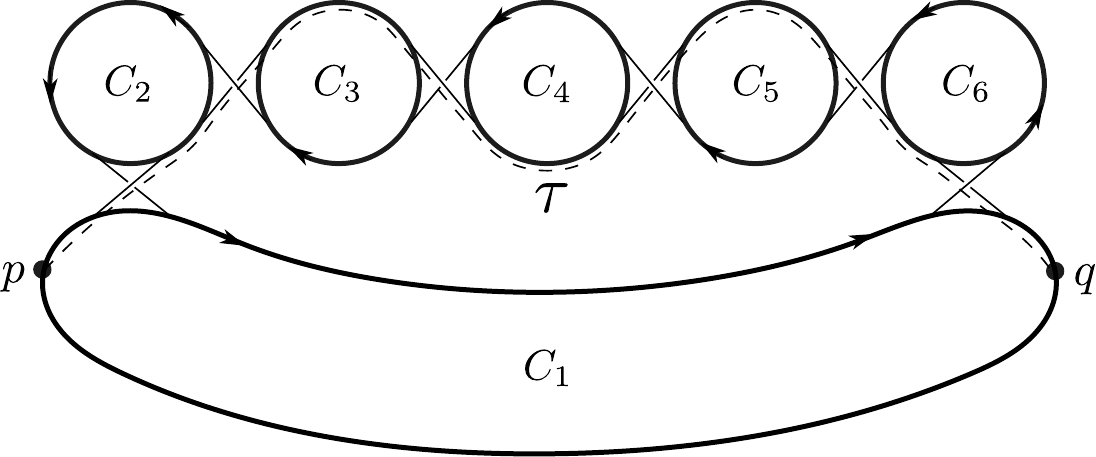}
\caption{The union of the dividing arcs ($\tau$) running through $C_1C_2\cdots C_mC_1$ ($m=6$ is shown here) is marked by the dashed curve.
}
\label{cycle_limit}
\end{figure}

Observation 2. Any operation on the Seifert circle walk of $\tilde{C}$ as defined in Remark \ref{re_loop2} is a reduction operation. Recall that such a reduction operation is performed at two consecutive crossings on $\tilde{C}$ corresponding to a sub-walk of the form $C_1^{\p}C_2^{\p}C_1^{\p}$, and it results in a new long IS circle $\tilde{C}^\p$ whose Seifert circle is obtained from that of $\tilde{C}$ by reducing the sub-walk $C_1^{\p}C_2^{\p}C_1^{\p}$ to $C_1^{\p}$. Since a closed sub-walk that does not contain a cycle contains at least a sub-walk of the form $C_1^{\p}C_2^{\p}C_1^{\p}$, and the corresponding sub-walk after the reduction operation results in a closed sub-walk that still does not contain a cycle, this reduction can be repeated until the sub-walk is reduced to a single Seifert circle.

Assume that the Seifert circle walk $C_1C_2\cdots C_kC_1$ of $\tilde{C}$ contains a cycle and consider a shortest closed sub-walk on it that contains a cycle of $G_S(D)$. By definition, any proper closed sub-walk of this sub-walk is cycle free and its underlying set of edges is a tree containing at least one leaf where we must have a reducible sub-walk of the form $C_1^{\p}C_2^{\p}C_1^{\p}$ by Observation 2 above, these sub-walks do not contain cycles hence each can be reduced to 
a single Seifert circle. The Seifert circle walk of the resulting long
IS circle $\tilde{C}^\p$ contains a sub-walk of the form  $C_1^{\p}C_2^{\p}\cdots C_m^{\p}C_1^{\p}$ where $C_1^{\p}C_2^{\p}\cdots C_m^{\p}C_1^{\p}$ is a cycle in $G_S(D)$. By Observation 1, this sub-walk cannot be reduced to a single Seifert circle as the resulting Seifert circle walk does not contain a sub-walk of the form $C_3^{\p}C_2^{\p}$ since the long IS circle only pass between $C_2^{\p}$ and $C_3^{\p}$ once. Thus the reduction operations cannot reduce the Seifert circle walk $C_1C_2\cdots C_kC_1$ to a single Seifert circle. That is, at some point, the operation defined in Remark  \ref{re_loop2} can no longer be a reduction operation and will increase the number of IS circles in the resulting IS circle decomposition, contradicting to the given condition that $s(I(D))=s(D)$. \qed

\section{Castles, floors and resolving trees}\label{s4}

In this section, we describe a structure called a {\em castle}, which is like a braid  in a local sense, and show that such structure exists in any link diagram. We then use this structure to develop two algorithms that will produce two special resolving trees for any given link diagram. The castle structure as well as these resolving trees will provide us the necessary tools in proving our main result of this paper.
 
\begin{definition} {\em Let $D$ be a link diagram. Consider three
    Seifert circles $C_1$, $C_2$ and $C_3$ in $G_S(D)$ as shown in
    Figure \ref{trap}. Notice that $C_3$ is bounded within the
    topological disk created by arcs of $C_1$, $C_2$ and the two
    crossings as shown in the figure, and that $C_3$ is connected to
    $C_2$, but not to $C_1$ (it cannot be connected to $C_1$ due to
    orientation restriction). We say that $C_3$ is {\em trapped} by $C_1$ and $C_2$. Similarly, $C_4$ is trapped by $C_2$ and $C_3$ as shown in Figure  \ref{trap}.}
\end{definition}

\begin{figure}[htb!]
\includegraphics[scale=.35]{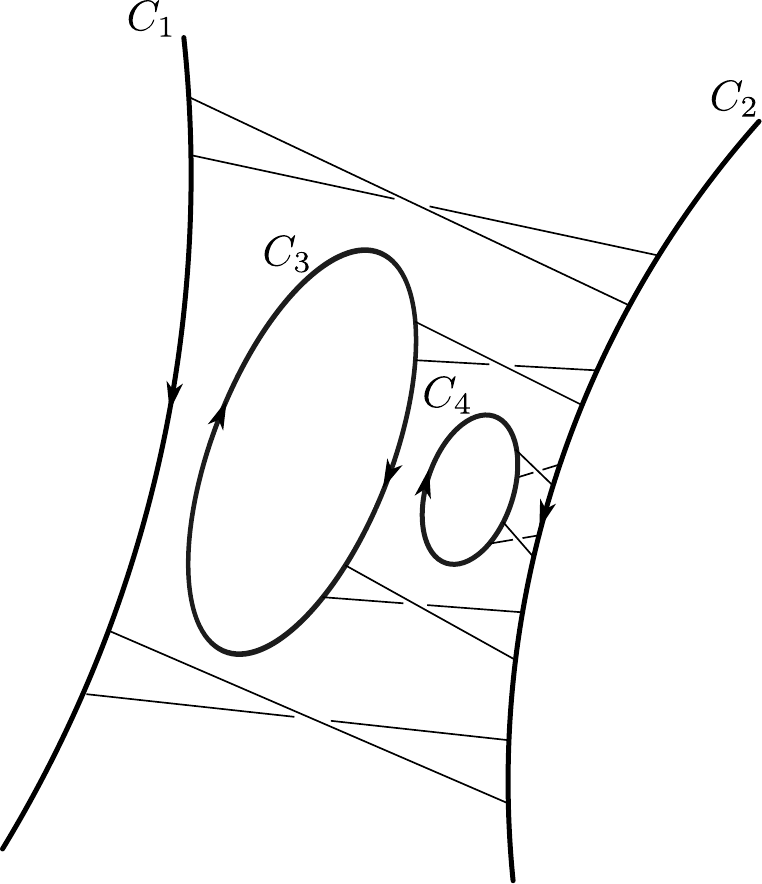}
\caption{$C_3$ is trapped by $C_1$ and $C_2$ and $C_4$ is trapped by $C_2$ and $C_3$.}
\label{trap}
\end{figure}

It is apparent from the definition that if $C_1$ traps $C_3$, then $C_3$ cannot trap $C_1$, in fact $C_3$, or any Seifert circle bounded within $C_3$, cannot trap any Seifert circle outside the disk bounded by $C_3$, $C_2$ and the two crossings as shown in Figure \ref{trap}.

\begin{definition}\label{concentric}{\em 
Seifert circles as illustrated in the left of Figure \ref{fig_concentric} are said to form a {\em concentric chain}, Seifert circles as illustrated in the right of Figure \ref{concentric} are said to form a {\em general concentric chain}. Notice that between two concentric Seifert circles we may have other Seifert circles as shown in the right of Figure \ref{fig_concentric}.}
\end{definition}

\begin{figure}[htb!]
\includegraphics[scale=.3]{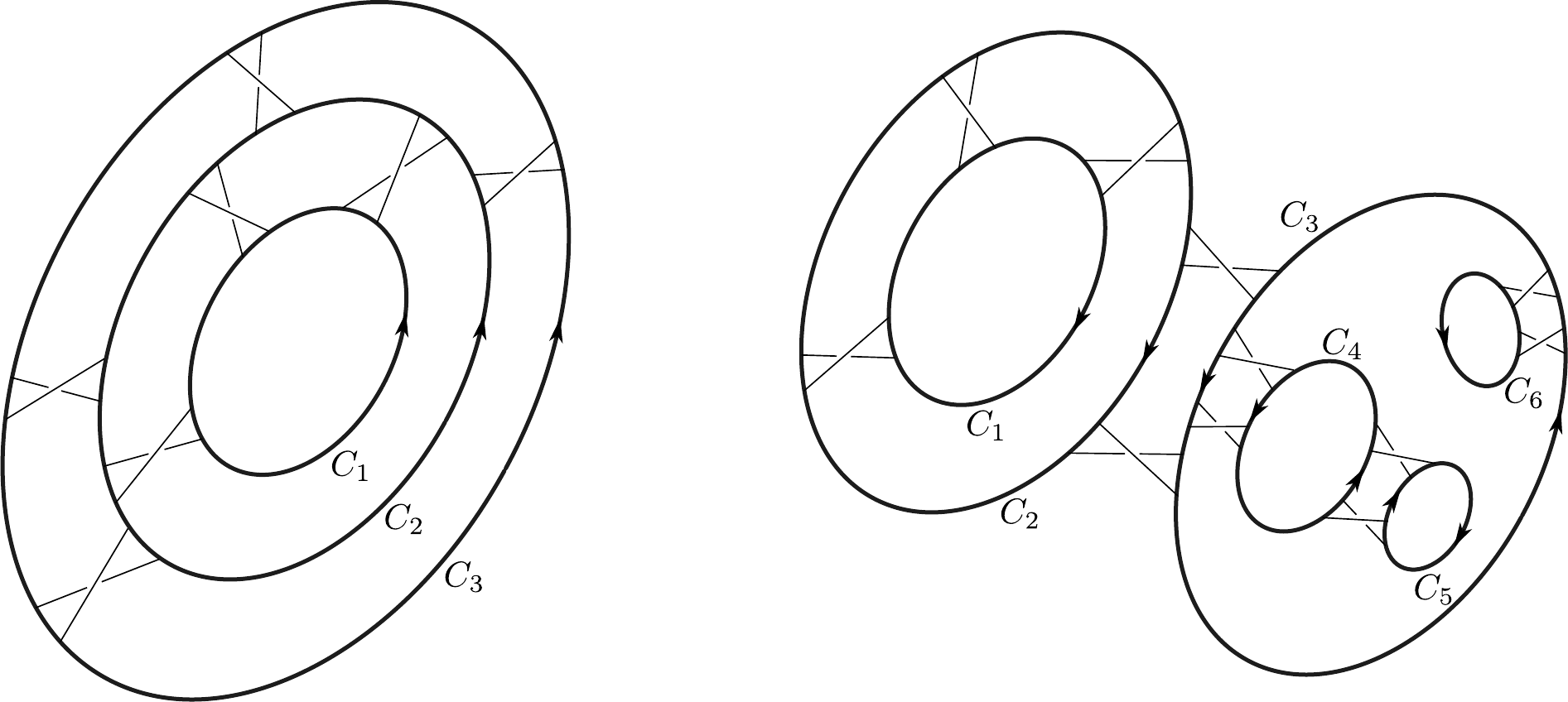}
\caption{Left: $C_1$, $C_2$ and $C_3$ form a concentric chain of Seifert circles; Right: $C_1$, $C_2$, $C_3$, $C_4$ and $C_1$, $C_2$, $C_3$, $C_6$  form two general concentric chains of Seifert circles.}
\label{fig_concentric}
\end{figure}

Let $D$ be a link diagram. Consider a Seifert circle $C$ of $D$ that is inner-most, that is, $C$ does not bound any other Seifert circles inside it. Choose a starting point on $C$ away from the places where crossings are placed on $C$. Following the orientation of $C$, we are able to order the crossings along $C$ as shown in Figure \ref{castle}, where a clockwise orientation is illustrated and for the purpose of illustration the part of $C$ from the first to the last crossing is drawn in a horizontal manner bounded between the starting point $p_0$ and ending point $q_0$ marked on it. We call this segment of $C$ the ground (0-th) floor (of the {\em castle} that is to be defined next). We now describe the procedure to build a structure on top of the ground floor that we call a {\em castle}. If $C$ is connected to another Seifert circle $C_1$ in $G_S(D)$, then there exists crossings between $C$ and $C_1$ between $p_0$ and $q_0$ and they can be ordered by the orientation of $C_1$ (which is the same as that of $C$). Let $p_{1}$ and $q_1$ be two points immediately before the first crossing and after the last crossing (so no other crossings are between $p_1$ and the first crossing, and between $q_1$ and the last crossing). The segment of $C_1$ between $p_1$ and $q_1$ is then called a {\em floor of level 1}. If $C_1$ has no crossings with other Seifert circles on this floor, then this floor terminates (for example if there is only one crossing between $C$ and $C_1$ then this floor will terminate). If $C_1$ shares crossings with another Seifert circle $C_2$ on this floor (which must be between $p_1$ and $q_1$), then we can define a second floor in a similar manner. We call the crossings between two floors {\em ladders} as we can only go up or down from a floor to the next through these crossings. This process is repeated until we reach a floor that terminates, meaning there are either no other floors on top or there are no ladders to reach the floor above. The castle is the structure that contains all possible floors (and ladders between them) constructed this way. Notice that there may be more than one separate floor on top of any given floor. If $F_k$ is a top floor, $F_{k-1}$ the floor below it, $F_{k-2}$ the floor below $F_{k-1}$, and so on, then the collection of floors $F_0$, $F_1$, ..., $F_k$, including all crossings among them, is called a {\em tower}. Notice that the Seifert circles corresponding to the floors in a tower form a general concentric Seifert circle chain hence the height of tower (the number of floors in it) is bounded above by the number of Seifert circles in $D$. Finally, between two adjacent floors we may have trapped Seifert circles that may or may not be part of the castle as shown in Figure \ref{castle}. However if there exist other floors between two adjacent floors,  the Seifert circle corresponding to the top floor will not share any crossings with the Seifert circles with floors between these two floors due to their opposite orientations. 

\begin{figure}[htb!]
\includegraphics[scale=.4]{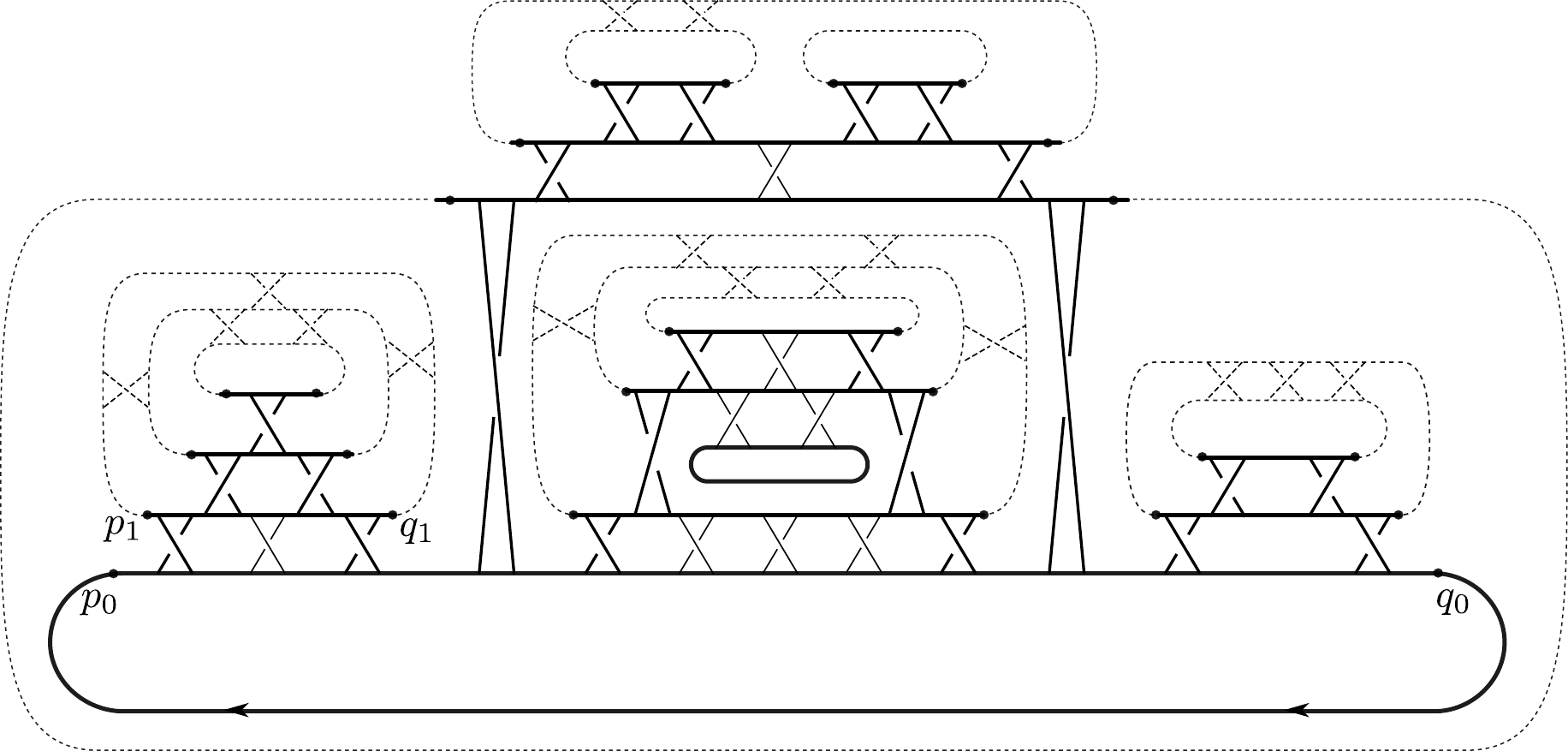}
\caption{A castle built on top of an inner most Seifert circle $C$.}
\label{castle}
\end{figure}

\begin{lemma}\label{lemma_trapfree}
For any link diagram $D$, there exists an inner most Seifert circle $C$ such that a castle built on it contains no trapped Seifert circles.
\end{lemma}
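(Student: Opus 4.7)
The plan is to argue by induction on the number of Seifert circles of $D$, combined with a descent on the number of trapped Seifert circles. The base case of one Seifert circle is trivial, since the castle then consists of only its ground floor and cannot contain any trapped Seifert circle. For the inductive step, suppose the lemma holds for all link diagrams with strictly fewer Seifert circles than $D$. Among all innermost Seifert circles of $D$, pick $C^{*}$ that minimizes the number of trapped Seifert circles in $\mathcal{K}(C^{*})$; the goal is to show this minimum is zero, which I will do by contradiction.

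Assume $\mathcal{K}(C^{*})$ contains a trapped Seifert circle $T$ between adjacent floors $F_{i}$ and $F_{i+1}$. Let $D_{T}$ be the sub-diagram of $D$ consisting of $T$ together with every Seifert circle of $D$ lying inside the disk bounded by $T$, equipped with the crossings inherited from $D$. Since $C^{*}$ is contained in the disk bounded by $F_{i}$, which is disjoint from the disk bounded by $T$, we have $|D_{T}|<|D|$, and the inductive hypothesis applied to $D_{T}$ provides an innermost Seifert circle $C'$ of $D_{T}$ whose castle inside $D_{T}$ is trap-free. Because any Seifert circle of $D$ contained in the disk bounded by $T$ shares crossings only with $T$ itself or with other Seifert circles contained in that same disk (Seifert circles are pairwise disjoint simple closed curves and cannot cross each other transversely), $C'$ is also innermost in $D$, and the portion of the castle $\mathcal{K}(C')$ built in $D$ that lies inside the disk of $T$ agrees with the trap-free castle of $C'$ in $D_{T}$. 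In particular, $T$ appears as a floor of $\mathcal{K}(C')$; the castle $\mathcal{K}(C')$ may then either terminate at $T$ or extend outward through the crossings of $T$ with Seifert circles lying outside the disk of $T$.

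To reach a contradiction I will compare the numbers of trapped Seifert circles of $\mathcal{K}(C^{*})$ and of $\mathcal{K}(C')$. The Seifert circle $T$ itself is trapped in $\mathcal{K}(C^{*})$ but is a floor of $\mathcal{K}(C')$, so it contributes one less to the trap count. In addition, all trapped Seifert circles of $\mathcal{K}(C^{*})$ lying in trap regions between floors below $F_{i+1}$, including any in the portion of the $F_{i}$--$F_{i+1}$ trap region on the $F_{i}$-side of $T$, no longer appear in any trap region of $\mathcal{K}(C')$. A careful analysis, using the orientation-consistent nesting forced by the Seifert algorithm, shows that every Seifert circle trapped in a new trap region of $\mathcal{K}(C')$ above $T$ was already trapped in $\mathcal{K}(C^{*})$ in the corresponding annular region between $T$ and $F_{i+1}$ or higher. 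Consequently $\mathcal{K}(C')$ has strictly fewer trapped Seifert circles than $\mathcal{K}(C^{*})$, contradicting the minimality of $C^{*}$. The main technical obstacle is this trap-count comparison: it requires unwinding how a trap disk of $\mathcal{K}(C^{*})$ is split by the promotion of $T$ to a floor in $\mathcal{K}(C')$, and reconciling the higher floors of $\mathcal{K}(C')$ above $T$ with the corresponding higher floors of $\mathcal{K}(C^{*})$ above $F_{i+1}$, for which the general concentric chain structure established earlier in Section~\ref{s4} is essential.
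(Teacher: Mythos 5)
Your overall strategy (strong induction on the number of Seifert circles combined with an extremal choice of base circle minimizing the trap count) is genuinely different from the paper's, which uses no induction at all: the paper simply iterates the rebasing step and argues termination because each new trapped Seifert circle is strictly nested inside a region bounded by the previous castle, so the sequence of rebasings cannot continue forever. Your first moves are sound -- passing to the sub-diagram $D_T$ of everything inside $T$ is legitimate (Seifert circles are disjoint, so circles inside the disk of $T$ only meet $T$ or each other), and the castle of $C'$ restricted to that disk does agree with the trap-free castle supplied by the inductive hypothesis.

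The gap is the trap-count comparison, which is exactly the step you flag as ``the main technical obstacle'' and then do not carry out. The claim that every Seifert circle trapped in $\mathcal{K}(C')$ above $T$ was already trapped in $\mathcal{K}(C^{*})$ is not justified and is not obviously true: once $\mathcal{K}(C')$ exits the disk of $T$, its floor on $C_{i+1}$ is the segment between the first and last crossings of $C_{i+1}$ with $T$, which is in general a different (typically shorter, differently positioned) segment than the floor $F_{i+1}$ of $\mathcal{K}(C^{*})$, which is delimited by the crossings of $C_{i+1}$ with $C_i$. Consequently all higher floors and all trap disks of $\mathcal{K}(C')$ are determined by different arcs and different pairs of crossings than those of $\mathcal{K}(C^{*})$; a circle that contributed a floor to $\mathcal{K}(C^{*})$ (hence was not trapped there) can find its ladder crossings lying outside the new, narrower floor segment and end up inside a trap disk of $\mathcal{K}(C')$. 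Without ruling this out, the inequality ``$\mathcal{K}(C')$ has strictly fewer trapped circles than $\mathcal{K}(C^{*})$'' is unproved, and the contradiction with minimality does not go through. If you want to salvage the argument, the cleaner route is the paper's: do not count traps globally, but observe that each successive trapped circle chosen for rebasing is strictly contained in a region bounded within the previous castle, so the rebasing process is a strictly decreasing nesting and must terminate at a trap-free castle.
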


Proof. Start with any inner most Seifert circle $C_0$ and build a castle on it. If it contains no trapped Seifert circles, we are done. If not, let $C^\p$ be a Seifert circle trapped between floors $F_i$ and $F_{i+1}$ (with $C_i$ and $C_{i+1}$ being their corresponding Seifert circles). Choose an inner most Seifert circle that is contained inside $C^\p$ (use $C^\p$ itself if it does not contain any Seifert circle within it) and build a castle on it. This castle is bounded away from either $F_i$ or $F_{i+1}$ depending on the orientation of the new base Seifert circle. Since the base Seifert circle is contained in $C^\p$, if the new castle is not contained within $C^\p$ entirely, then $C^\p$ will contribute a floor to the new castle. In fact, the curves in the new castle can only exit the region between $F_i$ and $F_{i+1}$ that traps $C^\p$ through either $F_i$ (if $C^\p$ shares crossings with $F_i$) or $F_{i+1}$ (if $C^\p$ shares crossings with $F_{i+1}$), but not both. It follows that the new castle is completed contained within the towers that contain $F_i$ and $F_{i+1}$. If the castle built on this new Seifert circle again contains trapped Seifert circles, we will repeat this process. Since this process starts with new trapped Seifert circles that are bounded within the previous castles, the process will end after finitely many steps and we reach a castle without trapped Seifert circles. \qed

We now define two algorithms used to derive two special resolving trees for link diagrams. 
These resolving trees will play a key role in proving our main result in the next section. 
First let us describe the general approach to obtain a resolving tree.

We think of a resolving tree as graph of a branching process: at each internal
vertex we take a crossing of the current link diagram, and branch on
smoothing or flipping the crossing. We are growing our resolving tree by
adding two children at a time to a vertex that was a leaf up until that
point. We will divide this process into several {\em phases}. 

When we flip a crossing, the corresponding child has essentially the
same connected components as the parent. Smoothing may merge two
existing components or split one component into two. In order to
maintain some control over this process, we select a starting point,
start traversing a connected component from there, and consider
branching on the crossings as we encounter them in this traversing
process. The first phase ends when all current leaf vertices have a
connected component containing our starting point and all crossings
(smoothed, kept, or flipped) of the original link diagram along the
connected component have been visited at least once. Note that we visit
a crossing twice exactly when it is an unchanged or flipped {\em loop
  crossing}, that is, a crossing whose both strands end up in the same
connected component at the end of phase.    

A crossing is called {\em descending} if during this process we travel
along the overpassing strand first, otherwise it is {\em ascending}.    
A {\em descending operation} keeps a descending crossing unchanged (no
branching happens when we encounter the crossing) and branches on
flipping or smoothing an ascending crossing. An {\em ascending
  operation} does just the opposite: it keeps an ascending crossing
currently visited, and branches on flipping or smoothing a descending
crossing. In the first phase we will use either the ascending or the
descending operation at all crossings. Hence in all leaf nodes at the
end of phase one we have a connected component containing our starting
point, along which all crossings of the original link diagram have been
visited, and the ones that were not smoothed are now either all
descending or all ascending. Therefore the connected component
containing our starting point is either below or above the other
components, it is not linked to them. At this point we remove this
component from consideration, we proceed as if it was not present any
more. We select a new starting point, and we perform a next phase of
adding new vertices to the resolving tree, using only ascending or only
descending operations. We continue adding new phases until there is no
crossings to be considered left. The leaf vertices of the final tree will
be unlinked unknots. 

Now, we will define two specific algorithms by choosing the starting point at the beginning of each phase and assigning the appropriate descending/ascending operation for that phase.

Algorithm P: The starting point at the beginning of each phase is chosen to be the starting point of the ground floor of a castle free of trapped Seifert circles. If the Seifert circle providing the ground floor is clockwise, the descending operation is applied, otherwise the ascending operation is applied throughout the entire phase.

Algorithm N: Exchange the descending and ascending operations in Algorithm P.

\vanish{\begin{figure}[htb!]
\includegraphics[scale=.4]{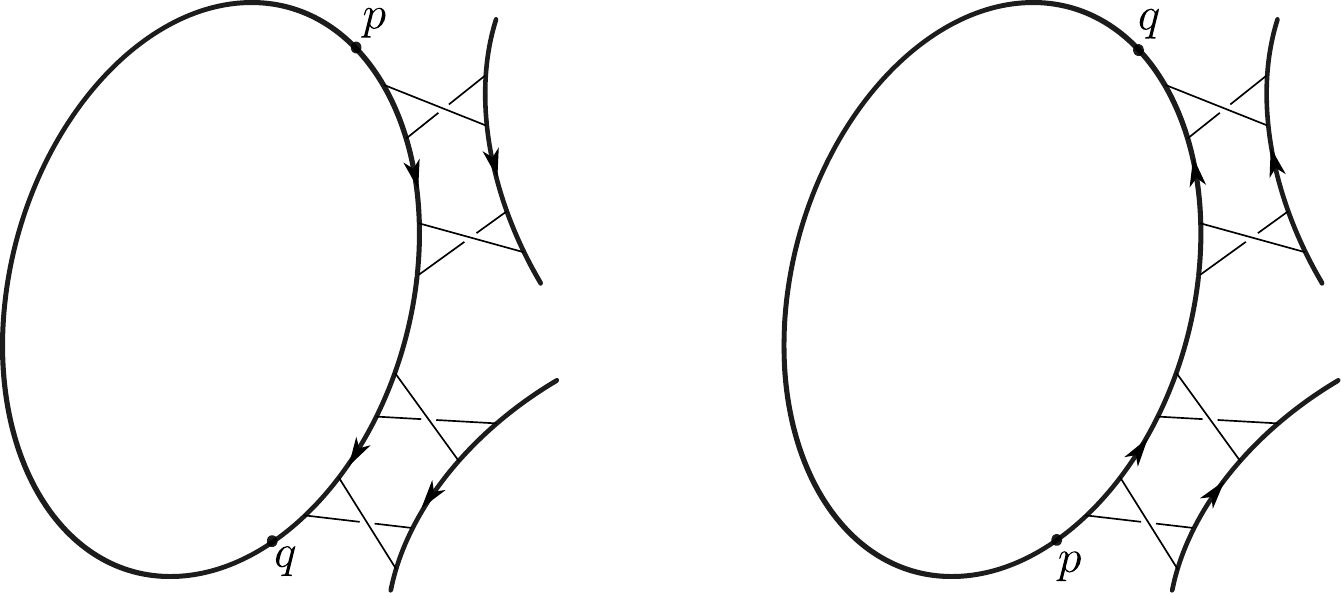}
\caption{The case of Algorithm P: The descending operation is to be applied on the left and the ascending operation is to be applied on the right.}
\label{Algorithm}
\end{figure}}

It is important to note that unlike the approaches used in our previous work \cite{LDH}, both the descending and ascending operations are used in the same algorithm, depending on the starting point!

As before, we will use $\T^+(D)$ and $\T^-(D)$ to denote the resolving trees obtained by applying Algorithms P and N respectively, and use $\F^+(D)$ and $\F^-(D)$ to denote the set of leaf vertices of $\T^+(D)$ and $\T^-(D)$ respectively.

\section{The main theorem and its proof}\label{s5}

In this section we will prove our main result Theorem \ref{main}. We will begin with some preparations.

Let $\U$ be a leaf vertex in either $\T^+(D)$ or $\T^-(D)$. Since each component is obtained with a fixed starting point, the loop crossings (if they exist) of the component are uniquely determined. Since the components are stacked over each other by the way they are obtained, the sum of the crossing signs between different components is zero. If we smooth the loop crossings, the resulting IS circles are also stacked over each other so the sum of the crossing signs between these different IS circles is also zero. It follows that $w(\U)$ equals the sum of the signs of the loop crossings. Say $\U$ contains $k=\gamma(\U)$ components and the $i$-th component contains $i_j\ge 0$ loop crossings. Smoothing the loop crossings of the $i$-th component results in $i_j+1$ IS circles, so smoothing all loop crossings of $\U$ results in $k+\sum_{1\le j\le k}i_j$ IS circles. By Lemma \ref{new_lemma2}, we have  $k+\sum_{1\le j\le k}i_j\le n$ where $n$ is the number of Seifert circles in $D$. Since $w(\U)$ equals the sum of the signs at the loop crossings of its components, it follows that $E\le n-w(D)-1$ and $e\ge -n-w(D)+1$ by (\ref{apower1}) and (\ref{apower2}), where $E$ and $e$ are maximum and minimum powers of $a$ in $P(D,z,a)$. Furthermore, if $\U\in \F^+(D)$ contains a negative loop crossing, then $w(\U)$ is strictly less than the total number of loop crossings in $\U$ and we will have $\gamma(\U)+w(\U)<n$. Similarly, if $\U\in \F^-(D)$ contains a positive loop crossing, then $\gamma(\U)-w(\U)<n$. We have proven the following lemma.

\begin{lemma}\label{component_path}
Let $E$ and $e$ be the maximum and minimum powers of $a$ in $P(D,z,a)$, then $E\le n-w(D)-1$ and $e\ge -n-w(D)+1$. If a component of $\U\in \F^+(D)$ contains a negative loop crossing, then $\gamma(\U)+w(\U)<n$. Similarly, if a component of $\V\in \F^-(D)$ contains a positive loop crossing, then $\gamma(\U)-w(\U)<n$.
\end{lemma}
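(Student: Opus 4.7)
The plan is to analyze the structure of a leaf $\U$ of $\T^+(D)$ or $\T^-(D)$, express $w(\U)$ and $\gamma(\U)$ in terms of the loop crossings on each of its components, and then invoke Lemma~\ref{new_lemma2} to control these quantities in terms of $n$. Finally I would feed the resulting bounds into the $a$-exponent formulas (\ref{apower1}) and (\ref{apower2}) to extract $E$ and $e$.

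First I would observe that, by construction of Algorithms P and N, each leaf $\U$ is a disjoint union of pairwise unlinked unknots: the algorithm processes one component per phase, and at the end of the phase the current component is a trivial knot unlinked from everything remaining. Because every pair of components of $\U$ has linking number zero, the signed sum of inter-component crossings of $\U$ vanishes, so $w(\U)$ reduces to the sum over the components of the signed counts of their self-crossings. Moreover, within each phase only one of the two operations (descending or ascending) is used, so the self-crossings that survive on each component along the chosen traversal are precisely the loop crossings in the sense of Definition~\ref{loop}.

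Next I would count loop crossings. If the $j$-th component of $\U$ has $i_j$ loop crossings, then smoothing them all produces $i_j+1$ IS circles on that component, so smoothing all loop crossings across $\U$ yields an IS decomposition of $D$ containing $\gamma(\U)+\sum_{j}i_j$ IS circles. By Lemma~\ref{new_lemma2} this count is at most $n$, and since each loop crossing contributes $\pm 1$ to the writhe of its component we also have $|w(\U)|\le \sum_{j}i_j$. Combining, $\gamma(\U)+|w(\U)|\le n$, which rearranges to $w(\U)+\gamma(\U)-1\le n-1$ and $w(\U)-\gamma(\U)+1\ge -n+1$. Substituting into (\ref{apower1}) and (\ref{apower2}), whose $a$-exponents are $w(\U)-w(D)\pm(\gamma(\U)-1)$, and taking the maximum/minimum over all leaves produces the bounds $E\le n-w(D)-1$ and $e\ge -n-w(D)+1$.

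For the refined statements I would note that the sign of a loop crossing is determined by the local geometry of the component, not by whether a descending or ascending operation was applied. If a component of $\U\in\F^+(D)$ carries a negative loop crossing, then among the $\sum_j i_j$ loop crossings at least one contributes $-1$ instead of $+1$, so $w(\U)<\sum_j i_j\le n-\gamma(\U)$, giving $\gamma(\U)+w(\U)<n$; the $\F^-(D)$ statement is symmetric. The step I expect to require the most care is justifying that the inter-component crossings of $\U$ genuinely cancel in $w(\U)$ and that the surviving self-crossings on each component really are loop crossings in the sense of Definition~\ref{loop}; both should follow from careful bookkeeping of how the phases of the algorithm stack the processed components and of which crossings are kept versus branched within each phase.
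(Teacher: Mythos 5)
Your route is essentially the paper's: reduce $w(\U)$ to a signed count of loop crossings, bound $\gamma(\U)+\sum_j i_j$ by $n$ via Lemma~\ref{new_lemma2}, and substitute into (\ref{apower1}) and (\ref{apower2}); the inter-component cancellation via linking number zero is also the intended argument. However, one step as written is false: the claim that ``the self-crossings that survive on each component along the chosen traversal are precisely the loop crossings in the sense of Definition~\ref{loop}.'' A component at a leaf of $\T^+(D)$ or $\T^-(D)$ is a completely descending (or ascending) closed curve, and such a curve typically has more surviving self-crossings than loop crossings. For instance, make the standard three-crossing trefoil projection descending from a basepoint: all three crossings survive as self-crossings of the component, but the procedure of Definition~\ref{loop} declares only the first repeated crossing a loop crossing; after it is smoothed, the other two crossings become crossings \emph{between} the resulting IS circle and the residual curve, so they are never loop crossings. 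Consequently the identity $w(\U)=\sum(\mbox{signs of loop crossings})$, and with it the crucial inequality $|w(\U)|\le\sum_j i_j$, does not follow from what you wrote; bounding $|w(\U)|$ by the total number of surviving self-crossings would be far too weak to give $\gamma(\U)+|w(\U)|\le n$.

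The missing ingredient is a second cancellation argument, parallel to the inter-component one. Because only one of the two operations is used in a phase, the arc between the two visits to a loop crossing lies entirely above (descending case) or entirely below (ascending case) the rest of the component at all of their common crossings; hence, after smoothing the loop crossings of a component, the resulting IS circles are stacked over one another, each pair has linking number zero, and the signs of the non-loop self-crossings --- which have become crossings between distinct IS circles --- sum to zero. Only with this observation does $w(\U)$ reduce to the signed count of loop crossings; once that is secured, the remainder of your argument, including the strict inequalities when a negative (resp.\ positive) loop crossing occurs in a leaf of $\F^+(D)$ (resp.\ $\F^-(D)$), goes through exactly as you wrote it.
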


For any given component $\Gamma$ of a leaf vertex in either $\T^+(D)$ or $\T^-(D)$, as we travel along it from its starting point (the starting point on the ground floor of the castle), we will eventually exit the castle for the first time through the end point of some floor. The arc of $\Gamma$ from its starting point to this exiting point is called the {\em maximum path} of $\Gamma$. If the ending point $\Gamma$ is the end point on the ground floor, it means that $\Gamma$ is completely contained within the castle and the base Seifert circle. Notice that a maximum path consists of only three kind of line segments: ladders going up or down (these are parts of  crossings), and straight line segments parallel to the ground floor. In the case that the base Seifert circle has clockwise orientation, the ladders going up or down are both from left to right. Furthermore, on either side of a segment that is parallel to the ground floor, there are no crossings (if there were crossings previously there in the original diagram, they are smoothed in the process of obtaining $\Gamma$). This means that if we travel along a strand of the link diagram from a point outside the castle following its orientation, in order to enter a floor below this maximum path, it is necessary for the strand to pass through the path through a ladder from the left in the case of clockwise orientation for the base Seifert circle or from the right in the case of counter clockwise orientation for the base Seifert circle. 

\begin{lemma}\label{lemma_loopfree}
If $\U\in \F^+(D)$ contains a maximum path that does not end on the ground floor,  then the maximum $a$ power in the contribution of $\U$ to $P(D,z,a)$ is smaller than  $n-w(D)-1$. Similarly, if any component of $\V\in \T^-(D)$ contains a maximum path that does not end on the ground floor, then the minimum $a$ power in the contribution of $\V$ to $P(D,z,a)$ is larger than  $-n-w(D)+1$. 
\end{lemma}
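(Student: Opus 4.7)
The plan is to prove the $\F^+(D)$ statement; the $\F^-(D)$ statement follows by a symmetric argument using~(\ref{apower2}) in place of~(\ref{apower1}). By~(\ref{apower1}), showing that the maximum $a$-power in $\U$'s contribution is strictly less than $n-w(D)-1$ is equivalent to $\gamma(\U)+w(\U)<n$. My first step is to recall the chain of inequalities $\gamma(\U)+w(\U)\le\gamma(\U)+\sum_j i_j\le n$ derived in the paragraph preceding Lemma~\ref{component_path}, where $\sum_j i_j$ counts the loop crossings of $\U$ and $\gamma(\U)+\sum_j i_j$ equals the number of IS circles produced by smoothing every loop crossing of $\U$. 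Equality $\gamma(\U)+w(\U)=n$ thus forces both \emph{(i)} every loop crossing of $\U$ to be positive (so $w(\U)=\sum_j i_j$), and \emph{(ii)} the loop-smoothed IS decomposition to attain exactly $n$ IS circles; it suffices to show the hypothesis forces a violation of (i) or (ii).

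Let $\Gamma$ be the component of $\U$ whose maximum path exits the castle at the end of floor $F_m$ with $m\ge 1$. Traced from its starting point, $\Gamma$ climbs successively through the Seifert circles $C_0,C_1,\ldots,C_m$ via ladder crossings, then leaves the castle along $C_m$. Let $I$ denote the IS circle in the loop-smoothed decomposition of $\Gamma$ that contains the starting point $p_1$; then $I$ contains the entire maximum-path climb, and its Seifert circle walk begins with the sequence $C_0,C_1,\ldots,C_m$ and then continues through some additional Seifert circles before closing back at $C_0$.

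The key geometric step is to prove the following dichotomy. Either (a) the edge multiset traversed by $I$'s Seifert walk is not a forest traversed evenly, in which case it contains a cycle of $G_S(D)$ of length at least $3$ so that Lemma~\ref{lemma4} provides fewer than $n$ IS circles and (ii) is violated; or (b) the return portion of $\Gamma$ effectively retraces the climbing edges $\{C_0,C_1\},\ldots,\{C_{m-1},C_m\}$, in which case the orientation geometry of Algorithm~P forces one of the associated loop crossings on $\Gamma$ to carry a negative sign, violating (i). In either branch $\gamma(\U)+w(\U)<n$, completing the proof.

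The main obstacle is verifying the dichotomy in detail, particularly the orientation analysis in branch (b). This requires three structural inputs: the alternation of actual (clockwise/counter-clockwise) orientations of adjacent floors in the tower, the absence of trapped Seifert circles supplied by Lemma~\ref{lemma_trapfree}, and the fact that Algorithm~P makes every non-smoothed crossing along $\Gamma$ descending, so that a revisit to a crossing is always traversed via its under-strand. The short-tower case $m=1$, where a cycle in $G_S(D)$ using only tower vertices is unavailable, needs special care and is expected to fall into branch~(b), with the negative sign coming from the orientation mismatch between the climb and the forced retrace.
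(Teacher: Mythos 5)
Your overall strategy coincides with the paper's: reduce to showing $\gamma(\U)+w(\U)<n$ via~(\ref{apower1}), and then argue that the offending component either produces a cycle in a Seifert circle walk (so Lemma~\ref{lemma4} kills the count of IS circles) or produces a negative loop crossing (so $w(\U)<\sum_j i_j$). The paper realizes exactly this dichotomy, but geometrically rather than combinatorially: either $\Gamma$ returns to the base circle $C_0$ \emph{within the same tower} $T_1$ that houses the maximum path, or it returns \emph{through a second tower} $T_2$. In the second case, if $F_j$ is the highest floor common to $T_1$ and $T_2$, the Seifert circle walk of $\Gamma$ contains a closed subwalk $C_jC_{j+1}\cdots C_{j+1}^{\prime}C_j$ with $C_{j+1}^{\prime}\neq C_{j+1}$, which forces a genuine cycle of $G_S$; this is cleaner than testing whether an edge multiset ``is a forest traversed evenly,'' and it covers your worry about the short-tower case $m=1$ automatically (a second tower branching off the ground floor already yields such a subwalk).

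The genuine gap is your branch (b): you state that the orientation analysis ``is the main obstacle'' and never carry it out, and that analysis is the entire content of the lemma in the within-one-tower case. The paper closes it with one geometric observation you have not isolated: along any horizontal segment of the maximum path there are no crossings left on either side (they were all smoothed), so the only way a strand can re-enter the region of the tower lying below the maximum path is through a ladder, approached from the left when $C_0$ is clockwise. That re-entry crossing is a self-crossing of $\Gamma$ that is retained, hence a loop crossing, and because Algorithm~P applies the descending operation for a clockwise base (first pass over, second pass under) the left-to-right ladder geometry forces its sign to be negative; Lemma~\ref{component_path} then gives $\gamma(\U)+w(\U)<n$. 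Without this step your argument does not go through, since merely ``retracing a climbing edge'' of $G_S(D)$ does not by itself create any loop crossing, let alone a negative one: the retrace could a priori happen through crossings that are smoothed or through crossings between different components. You also omit the bookkeeping for the case where the bad component is not the first one; the paper handles this by noting the earlier components are IS circles of $D$ and passing to the residual diagram $D^{\prime}$ with at most $n-k$ Seifert circles, although your reformulation via the global inequality $\gamma(\U)+\sum_j i_j\le n$ could absorb this if made precise.
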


Proof. Consider the case of $\U \in \T^+(D)$ first. Assume $\Gamma$ is the first component of $\U$ that contains such a maximum path. Notice that a component that is bounded within the castle and its base Seifert circle contains no loop crossings. So the components before $\Gamma$ are all IS circles of $D$. Say there are $k$ components before $\Gamma$, then by Lemma \ref{lemma4}, there are at most $n-k$ Seifert circles in $D^\p$ where $n$ is the number of Seifert circles in $D$ and $D^\p$ is the link diagram obtained from $D$ after the first $k$ components are removed from it, since Seifert circles in $D^\p$ are IS circles of $D$. Let $C_0$ be the base Seifert circle on which the castle used to derive $\Gamma$ is built.
\begin{figure}[htb!]
\includegraphics[scale=.4]{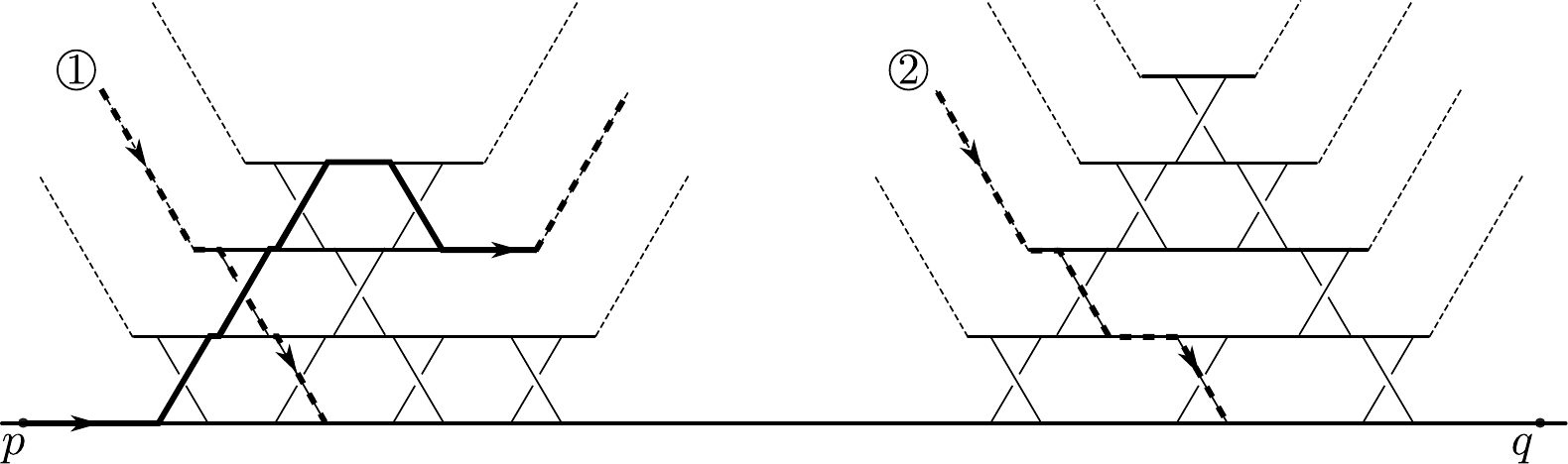}
\caption{A component whose maximum path ends on a floor higher than the ground floor either contains a loop crossing (case 1) or runs through a cycle of Seifert circles (case 2).}
\label{tower}
\end{figure}

Without loss of generality, we assume that $C_0$ has clockwise orientation. Let $F_i$ (of level $i\ge 1$) be the floor where the maximum path of $\Gamma$ exits from its end point and let $T_1$ be the tower that houses the maximum path. If $\Gamma$ is to get back to $C_{0}$ within $T_1$, it will have to cross the maximum path from the left side as shown in Figure \ref{tower} (case 1 marked in the graph), where the maximum path is drawn in double lines, creating a negative loop crossing (since in this case we are applying a descending algorithm). Hence $\gamma(\U)+w(\U)<n$ by Lemma \ref{component_path} and it follows that the $a$-degree of the contribution of $\U$ to $P(D,z,a)$ is less than $n-w(D)-1$ by (\ref{apower1}). If $\Gamma$ does not go back to the base floor within $T_1$, then it has to do so through another tower $T_2$. Let us first smooth the loop crossings of $\Gamma$ that are defined by the starting point on $C_0$. Let $\tilde{C}$ be the resulting IS circle that contains the starting point (which contains the part of $\Gamma$ that is within $T_1$, of course) and let $I_1(D)$ be as defined in Lemma \ref{lemma4}. Let $F_j$ be the highest common floor $T_1$ and $T_2$ share, it is necessary that $j<i$ and it is also necessary that $\Gamma$ returns to $F_j$ from $F_{j+1}^\p$ (the floor above it in $T_2$) within $T_2$. Consider the sub-walk of the Seifert circle walk of $\tilde{C}$, defined by traversing $\tilde{C}$ (following its orientation), starting from its last dividing arc on $C_j$ within $T_1$ and ending at the first dividing arc on $C_j$ within $T_2$. This sub-walk has the form of $C_jC_{j+1}\cdots C^\p_{j+1}C_j$. If this sub-walk contains a sub-walk starting and ending at $C_j$, by the proof of Lemma \ref{lemma4}, it cannot contain a cycle, hence can be reduced to a single Seifert circle $C_j$. However the last step of the reduction would be performed on a sub-walk of the form $C_jC_{j+1}C_j$, which is impossible since the other crossing is either within $T_1$ which is impossible since we started on the last dividing arc on $C_j$ in $T_1$, or the new dividing arc will contain the entire portion of $C_j$ that is outside of $T_1$ which is also impossible since the long IS circle would then not go through $C^\p_{j+1}$ at all. It follows that $C_jC_{j+1}\cdots C^\p_{j+1}C_j$ contains no other closed sub-walk starting and ending at $C_j$, hence it must contain a cycle of $G_S(D)$. By Lemma \ref{lemma4}, it follows that the total number of components in $\U$ plus the total number of loop crossings in them is less than $n$, so $\gamma(\U)+w(\U)<n$ again, and the maximum $a$ power in the contribution of $\U$ to $P(D,z,a)$ is also less than $n-w(D)-1$. The case of $\V\in \T^-(D)$ is similar and is left to our reader. \qed

Lemma \ref{lemma_loopfree} leads to the following two corollaries.

\begin{corollary}\label{Corollary1}
If  the contribution of $\U\in \F^+(D)$ to $P(D,z,a)$ contains an $a^{n-w(D)-1}$ term, then each component of $\U$ is an IS circle of $D$. Similarly, if the contribution of  $\V\in \F^-(D)$  to $P(D,z,a)$ contains an $a^{-n-w(D)+1}$ term, then each component of $\V$ is also an IS circle of $D$.
\end{corollary}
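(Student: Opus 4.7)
The plan is to derive the corollary as a direct consequence of the two preceding lemmas: I would first use the $a$-exponent structure of $\U$'s contribution together with Lemma \ref{component_path} to force an equality, then invoke Lemma \ref{lemma_loopfree} in contrapositive form to conclude.

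First I would rewrite the contribution of $\U \in \F^+(D)$ to $P(D,z,a)$ as
\[
(-1)^{t^-(\U)}\, z^{t(\U) - \gamma(\U) + 1}\, a^{w(\U) - w(D)}\, (a - a^{-1})^{\gamma(\U) - 1},
\]
so the $a$-exponents that actually appear in it are precisely $w(\U) - w(D) + \gamma(\U) - 1 - 2i$ for $i = 0, 1, \ldots, \gamma(\U) - 1$. If this contribution contains an $a^{n-w(D)-1}$ term, then $w(\U) + \gamma(\U) = n + 2i$ for some such $i$. The bound $w(\U) + \gamma(\U) \le n$ established during the proof of Lemma \ref{component_path} (coming from $|w(\U)| \le \sum_j i_j$ together with $\gamma(\U) + \sum_j i_j \le n$) then forces $i = 0$ and $w(\U) + \gamma(\U) = n$; in particular the maximum $a$-power of $\U$'s contribution equals exactly $n - w(D) - 1$.

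Second I would apply the contrapositive of Lemma \ref{lemma_loopfree}. If some component of $\U$ had a maximum path that did not end on the ground floor of the castle constructed at its phase of Algorithm P, then Lemma \ref{lemma_loopfree} would give a \emph{strict} inequality for the maximum $a$-power, contradicting the equality just obtained. Hence every component of $\U$ lies entirely inside the castle and base Seifert circle on which its phase was built. As noted in the proof of Lemma \ref{lemma_loopfree}, such a component carries no loop crossings and therefore no self-intersections, so it is a simple closed curve obtained from $D$ by smoothing some crossings, i.e., an IS circle of $D$.

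The case $\V \in \F^-(D)$ is entirely symmetric: one uses the bound $e \ge -n - w(D) + 1$, the analogous lower exponent $w(\V) - w(D) - \gamma(\V) + 1$ in the monomial expansion, and the minimum-$a$-power half of Lemma \ref{lemma_loopfree}. I do not expect a real obstacle here; the only subtle point is to ensure that the mere presence of an $a^{n-w(D)-1}$ (respectively $a^{-n-w(D)+1}$) term in the contribution -- not just the fact that it is the top power -- really does pin down $w(\U) + \gamma(\U) = n$. The exponent-parity argument above supplies exactly this, and from that point the two lemmas do all the work.
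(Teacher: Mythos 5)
Your proof is correct and takes essentially the same route as the paper: the paper's one-line argument likewise applies (the contrapositive of) Lemma \ref{lemma_loopfree} to conclude that every component's maximum path ends on the ground floor, hence the component carries no loop crossings and is an IS circle. Your exponent-parity step, which pins the top $a$-power of $\U$'s contribution at exactly $n-w(D)-1$ using the bound $w(\U)+\gamma(\U)\le n$ from Lemma \ref{component_path}, merely makes explicit a detail the paper leaves implicit.
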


Proof. This is direct from the proof of Lemma  \ref{lemma_loopfree}, since every such component has a maximum path ending at the end point of the ground floor, so the component contains no loop crossings. \qed

The following result provides the proof for the only if part of Theorem \ref{main}. We feel that it is significant enough on its own so we state it as a theorem. Notice that it implies the inequality in the MFW inequality is strict: $a$-span$/2+1<n$.

\begin{theorem}\label{main2}
Let $D$ be a link diagram of a link $L$ such that $G_S(D)$ contains an edge of weight one ($L$ and $D$ need not be alternating) and $n$ be the number of Seifert circles in $D$, then we have $b(L)<n$.
\end{theorem}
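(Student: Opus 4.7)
By Yamada's theorem, $b(L)$ equals the minimum number of Seifert circles taken over all oriented diagrams of $L$. Thus, to prove $b(L)<n$, it suffices to produce a diagram $D'$ of $L$ with $s(D')\le n-1$, where $s(D')$ denotes the number of Seifert circles in $D'$.

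Let $c$ be the unique crossing shared by $C_1$ and $C_2$, and let $B$ be a small disk around $c$ meeting $D$ transversely in exactly four boundary points. Inside $B$, $D$ consists of just the crossing $c$; Seifert smoothing at $c$ replaces this crossing by two disjoint arcs inside $B$, one belonging to $C_1$ and one to $C_2$. Outside $B$, the continuations of $C_1$ and $C_2$ are disjoint simple arcs in $\R^2\setminus B$ (since $C_1$ and $C_2$ are disjoint simple closed curves in the plane), joining the four boundary points of $B$ in the pattern prescribed by the Seifert smoothing.

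The plan is to construct $D'$ by a local move at $c$, realized as an ambient isotopy of $L$ in $\R^3$, that absorbs the crossing $c$ and merges $C_1$ and $C_2$ into a single Seifert circle while leaving all other Seifert circles of $D$ unchanged. Informally, one lifts one of the two strands at $c$ above (or below) the plane of the diagram, slides it past the other, and lowers it back into the plane on the opposite side. Because $c$ is the \emph{only} crossing between $C_1$ and $C_2$, the two arcs of $C_1$ and $C_2$ outside $B$ do not interleave, so there is no topological obstruction to performing this move as an ambient isotopy without creating any new crossings in the projection. The effect on the Seifert structure is precisely to replace $C_1$ and $C_2$ by the single Seifert circle obtained from the Seifert smoothing at $c$; hence $s(D')=n-1$, and Yamada's theorem gives $b(L)\le s(D')=n-1<n$.

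The main obstacle I expect is in rigorously verifying the local move. The argument has to cover the two possible local orientation patterns of the strands at $c$, both signs of the crossing, and the various planar arrangements of $C_1$ and $C_2$ relative to each other and to the other Seifert circles of $D$ (for instance, whether one of $C_1$, $C_2$ is nested inside the other, or whether they bound disjoint regions of the plane). In each case one has to check carefully that the local ambient isotopy creates no new crossings in the projection that could split off additional Seifert circles in $D'$; once this verification is in place in every configuration, the bound $b(L)<n$ follows at once from Yamada's theorem.
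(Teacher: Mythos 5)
Your high-level strategy is the same as the paper's: invoke Yamada's theorem and exhibit a diagram of $L$ with at most $n-1$ Seifert circles. But the core of your argument --- that the crossing $c$ can be absorbed by a local ambient isotopy \emph{without creating any new crossings in the projection} --- is false whenever $c$ is not nugatory, and that is exactly the hard case. To reroute the overpass arc of $c$ crossing-free, its two endpoints $x$ and $y$ would have to lie in a common face of the diagram with the overpass deleted; near $c$ they are separated by the underpass, so this requires two corners of the crossing $c$ to lie in the same face of $D$. A simple closed curve in the plane meets an immersed closed curve in an evenly many points, so two corners separated by a single edge-germ can never share a face; the only possible coincidence is between opposite corners, which is precisely the nugatory condition. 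Thus for a non-nugatory weight-one crossing, \emph{every} rerouting of the overpass must cross other strands of $D$ --- the fact that $C_1$ and $C_2$ ``do not interleave'' is irrelevant, because the obstruction comes from the other Seifert circles attached to them. A concrete refutation: the reduced alternating diagram of $5_2$ has $5$ crossings and its Seifert graph contains a weight-one edge; your move would yield a $4$-crossing diagram of $5_2$, contradicting the Tait crossing-number theorem.

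The paper's proof accepts that new crossings are unavoidable and instead reroutes the overpass \emph{around} the neighboring Seifert circles $C_1,\dots,C_j$ of $C''$, keeping the rerouted strand over everything it meets. This creates new crossings, but each new crossing is between the rerouted strand and a strand it runs parallel to (in the sense of orientation), so the Seifert smoothing at the new crossings leaves $C_1,\dots,C_j$ and all other Seifert circles intact while merging $C'$ and $C''$ into one circle; the count drops to $n-1$ and Yamada's theorem finishes the argument. To repair your proof you would need to replace the crossing-free local move with such a controlled rerouting and verify its effect on the Seifert circle decomposition, which is the actual content of the paper's argument.
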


\begin{proof}
Let $C^\p$ and $C^{\p\p}$ be two Seifert circles sharing only one crossing between them. If $C^{\p\p}$ shares no crossings with any other Seifert circles, then the crossing between $C^\p$ and $C^{\p\p}$ is nugatory and the statement of the theorem holds. So assume that this is not the case and let $C_1$, $C_2$, ..., $C_j$ be the other Seifert circles sharing crossings with $C^{\p\p}$. The orientations of $C_1$, $C_2$, ..., $C_j$ are the same as that of $C^\p$ and there are no crossings between any two of them. A case of $j=2$ is shown in Figure \ref{circle_reduction}. We will reroute the overpass at the crossing between $C^\p$ and $C^{\p\p}$ along $C_1$, $C_2$, ..., $C_j$ as shown in Figure \ref{circle_reduction} (keeping the strand over the crossings we encounter). 

\begin{figure}[htb!]
\includegraphics[scale=.4]{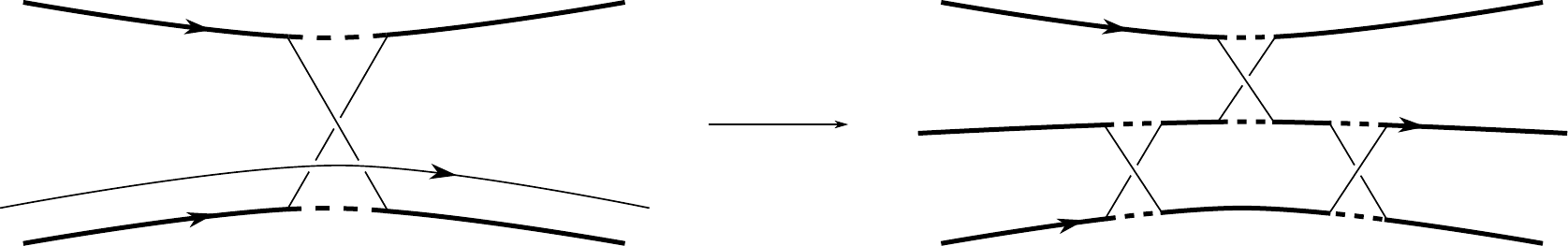}
\caption{The local effect of rerouting the overpass to the Seifert circle structure: the two original Seifert circles remain unchanged.}
\label{local_effect}
\end{figure}

Rerouting the overpass this way will only create new crossings over some crossings between the $C_i$'s and its neighbors other than $C^{\p\p}$. The effect of this rerouted strand to the Seifert circle decomposition structure locally is shown in Figure \ref{local_effect}, which does not change the Seifert circles $C_1$, $C_2$, ..., $C_j$, but the weights of the edges connecting to the vertices corresponding to them in $G_S(D^\p)$ (where $D^\p$ is the new link diagram after the rerouting) may have changed from those of in $G_S(D)$. See Figure \ref{graph_change}.

\begin{figure}[htb!]
\includegraphics[scale=.35]{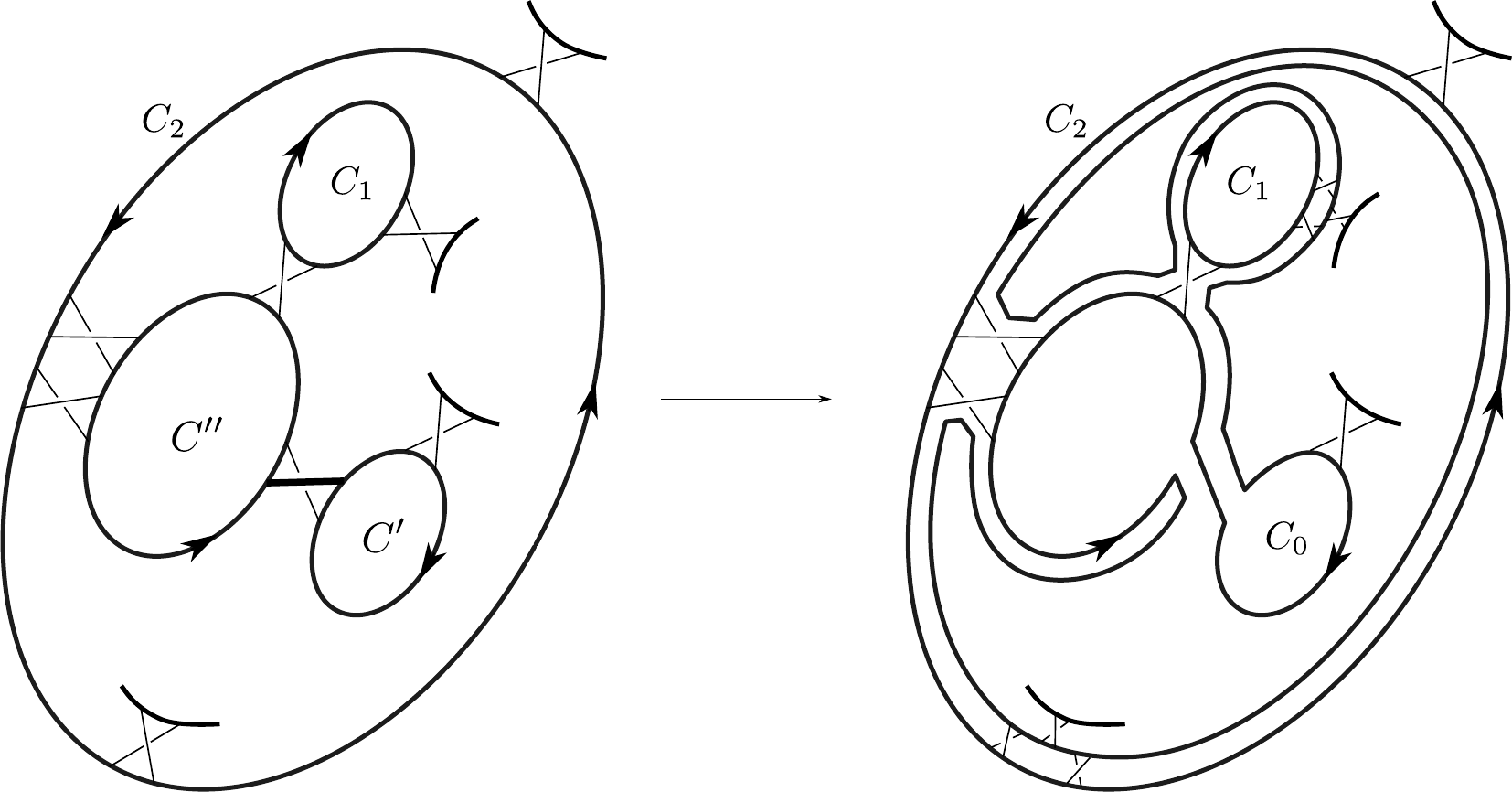}
\caption{Left: The overpass at the single crossing to be rerouted; Right: The two Seifert circles merge into one after the overpass is rerouted as shown.}
\label{circle_reduction}
\end{figure}

\begin{figure}[htb!]
\includegraphics[scale=.7]{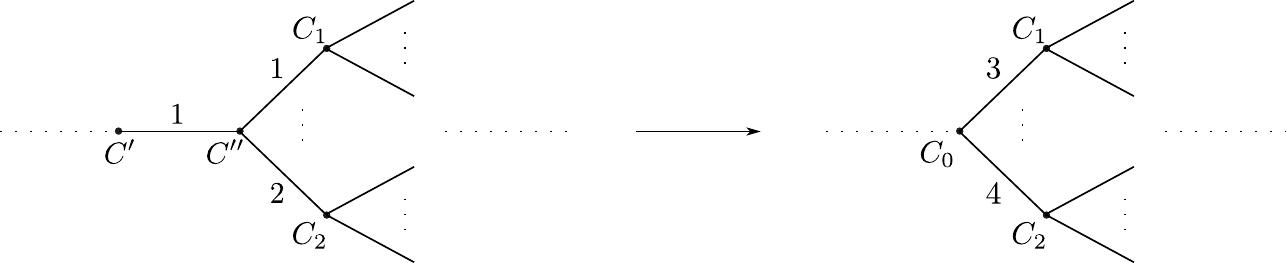}
\caption{How the Seifert graph of the link in Figure \ref{circle_reduction} changed after the rerouting.}
\label{graph_change}
\end{figure}

At the end, we arrive at a new link diagram $D^\p$ that is equivalent to $D$, but with one less Seifert circle. The result then follows since the number of Seifert circles in $D^\p$ is an upper bound of the braid index of $D^\p$, hence $D$.
\end{proof}

If $D$ is a reduced alternating link diagram and $G_S(D)$ contains no edges of weight one, then for any pair of Seifert circles in $D$ that are adjacent in $G_S(D)$, they share at least two crossings and these crossings are of the same signs (in fact all crossings on one side of any Seifert circle of $D$ are of the same sign). Let $\tau^+$ be the total number of positive crossings in $D$, $\sigma^+$ be the number of pairs of Seifert circles in $D$ that share positive crossings, and let $\tau^-$ be the total number of negative crossings in $D$, $\sigma^-$ be the number of pairs of Seifert circles in $D$ that share negative crossings. We are now ready to prove our main theorem.

Proof. ``$\Longrightarrow$" If the braid index of $D$ is $n$ then $G_S(D)$ is free of edges of weight 1 by Theorem \ref{main2}. 

``$\Longleftarrow$" We will prove this in two steps. In the first step, we construct a specific leaf vertex in $\U\in \F^+(D)$ whose contribution to $P(D,z,a)$ contains a term of the form $(-1)^{t^{-}(\U)}z^{t(\U)-n+1}a^{n-w(D)-1}$, where $t^{-}(\U)=\tau^- -2\sigma^-$ and $t(\U)=\tau^++\tau^- -2\sigma^-$. Similarly we construct a specific leaf vertex in $\V\in \F^-(D)$ whose contribution to $P(D,z,a)$ contains a term of the form $(-1)^{t^{-}(\V)+n-1}z^{t(\V)-n+1}a^{-n-w(D)+1}$, where $t^{-}(\V)=\tau^+ -2\sigma^+$ and $t(\V)=\tau^-+\tau^+ -2\sigma^+$. In the second step we show that if a leaf vertex $\U^\p\in \F^+(D)$ makes a contribution to the $a^{n-w(D)-1}$ term in $P(D,z,a)$, then $t^{-}(\U^\p)\le \tau^- -2\sigma^-$. Similarly, if a leaf vertex $\V^\p\in \F^-(D)$ makes a contribution to the $a^{-n-w(D)+1}$ term in $P(D,z,a)$, then $t^{-}(\V^\p)\le \tau^+ -2\sigma^+$. 

Combining the results of the these two steps will then lead to the conclusion of the theorem since the result from the second step implies that $t(\U^\p)\le \tau^++\tau^- -2\sigma^-$ and $t(\V^\p)\le \tau^-+\tau^+ -2\sigma^+$. So $\tau^++\tau^- -2\sigma^-$ is the maximum power of $z$ in the coefficient of the $a^{n-w(D)-1}$ term in $P(D,z,a)$ and $\tau^-+\tau^+ -2\sigma^+$ is the maximum power of $z$ in the coefficient of the $a^{-n-w(D)+1}$ term in $P(D,z,a)$. Furthermore, these maximum powers can only be contributed from $\U^\p\in \F^+(D)$ and $\V^\p\in \F^-(D)$ that are obtained by smoothing all positive crossings of $D$ and all negative crossings of $D$ except two between each pair of Seifert circles sharing negative crossings in the case of $\U^\p$, and by smoothing all negative crossings of $D$ and all positive crossings of $D$ except two between each pair of Seifert circles sharing positive crossings in the case of $\V^\p$. Apparently any such $\U^\p$, $\V^\p$ will make exactly the same contributions to $P(D,z,a)$ as that of $\U$ and $\V$. Thus $E=n-w(D)-1$ and $e=-n-w(D)+1$. So $E-e=2(n-1)$ and $(E-e)/2+1=a$-span$/2+1=n$ and the theorem follows.

Step 1. Choose a castle that is free of trapped Seifert circles. Let $C_0$ be the base Seifert circle of the castle with starting point $p$ and ending point $q$ on its floor. Let us travel along $D$ starting at $p$.

Case 1. The crossings between $C_0$ and its adjacent Seifert circles are all positive. If $C_0$ is clockwise, then we need to apply the descending rule. We will encounter the first crossing from its under strand. We will stay with the component obtained by smoothing this crossing. So we are still traveling on $C_0$ after this crossing is smoothed. We then encounter the next crossing from its under strand and we can again smooth this crossing. Repeating this process, we arrive the first component of $\U$ by smoothing all the crossings between $C_0$ and its adjacent neighbors. If $C_0$ is counter clockwise then we will be applying the ascending rule and we can also obtain a component of $\U$ by smoothing all crossings along $C_0$. It is apparent that after we remove this new component from the diagram, the resulting new diagram is still alternating and has $n-1$ Seifert circles left.

Case 2. The crossings between $C_0$ and its adjacent Seifert circles are all negative. If $C_0$ is clockwise, then we need to apply the descending rule. Let $C_1$ be the first Seifert circle with which $C_0$ shares a crossing as we travel along $C_0$ from $p$. In this case we encounter the first crossing from its over strand. Therefore we have no choice but to keep this crossing. This moves us to $C_1$. Keeping in mind that by the given condition $C_0$ and $C_1$ share at least two crossings and all crossings between $C_1$ and $C_0$ are on the floor of $C_1$ above $F_0$, as we travel on $F_1$ toward the last crossing between $C_1$ and $C_0$, we encounter each crossing from an under strand as one can check. So we can smooth all crossings we encounter (either between $C_1$ and $C_0$ or between $F_1$ and $F_2$ before we reach the last crossing between $C_1$ and $C_0$. We then flip the last crossing to return to $C_0$. If $C_0$ is adjacent to more Seifert circles, we repeat the same procedure. Finally we return to the ending point of $F_0$ and back to the starting point. See Figure \ref{negativecase} for an illustration, where a case of two floors on top of $F_0$ is shown.

\vanish{\begin{figure}[htb!]
\includegraphics[scale=.3]{negativecase}
\caption{How to construct a component starting from a base Seifert circle with negative crossings with its neighbors.}
\label{negativecase}
\end{figure}}

\begin{figure}[htb!]
\includegraphics[scale=.4]{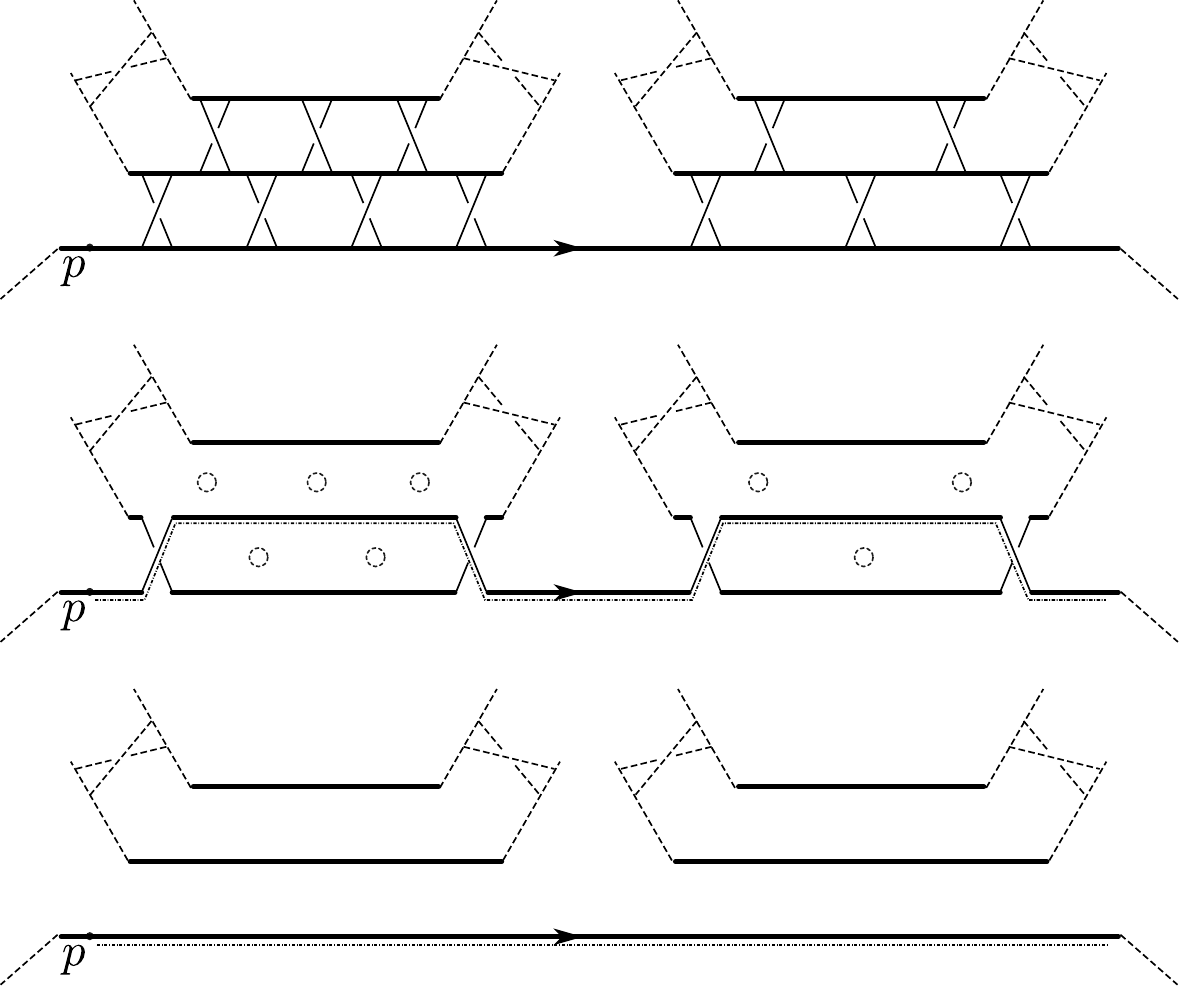}
\caption{How to construct a component starting from a base Seifert circle with negative crossings with its neighbors.}
\label{negativecase}
\end{figure}
 
Since smoothing crossings does not change the alternating nature of a diagram, removing this newly created component will keep the resulting diagram alternating as one can easily see from Figure \ref{negativecase}. In fact, the new diagram is equivalent to the one obtained by smoothing all crossings encountered by the maximum path of the new component (with the new component removed). So it contains $n-1$ Seifert circles. If $C_0$ is counter clockwise, the above argument is the same after we replace the descending algorithm by the ascending algorithm.

So in both cases we created a component and the new diagram is still alternating and contains one less Seifert circle than before. Thus this process can be repeated, at the end we obtain $\U$, which contains $n$ components (each one is an IS circle). And by the way $\U$ is obtained, all positive crossings have been smoothed, and between any pair of Seifert circles that share negative crossings, all but two crossings are smoothed. $\V$ is obtained in a similar manner in which all negative crossings are smoothed, and between any pair of Seifert circles that share positive crossings, all but two crossings are smoothed. This finishes Step 1.

Step 2. Consider a leaf vertex $\U^\p\in \F^+(D)$ that makes a contribution to the $a^{n-w(D)-1}$ term in $P(D,z,a)$. By Lemma \ref{lemma_loopfree}, the maximum path of each component of $\U^\p$ is bounded within its defining castle. Let $\Gamma_1$ be the first component of $\U^\p$. Consider a horizontal segment of the maximum path that represents a local maximum. We leave it to our reader to verify that we will never encounter a negative crossing to the left side of $\Gamma_1$, and all crossings to the right of $\Gamma_1$ are positive and are smoothed. Thus, for a given pair of Seifert circles in $D$ that share negative crossings, if $\Gamma_1$ crosses from one to the other, then at least two crossings between them are not smoothed. If $\Gamma_1$ does not cross from one to the other, then removing $\Gamma_1$ may change parts of these two Seifert circles but will not affect the crossings between these two Seifert circles. The same argument can then be applied to the next component $\Gamma_2$, and so on. It follows that for each pair of Seifert circles sharing negative crossings, at least two crossings cannot be smoothed in $\U^\p$. That is, $\T^{-}(\U^\p)\le \tau^- -2\sigma^-$. Similarly, we have $\T^{-}(\V^\p)\le \tau^+ -2\sigma^+$. \qed

\section{Further discussions}\label{s6}

An immediate consequence of Theorem \ref{main} is that if $D$ is a reduced alternating link diagram and $G_S(D)$ is free of cycles (a tree in the case that link is non-splittable), then the braid index of $D$ is the number of Seifert circles in $D$ since an edge of weight one corresponds to a nugatory crossing in $D$ which do not exist when $D$ is reduced. From the point view of a Seifert graph, one can compare the previously known results to our result. For example,
in \cite{Mu}, Murasugi showed that if a reduced alternating link diagram is a star product of elementary torus links (namely torus links of the form $(2,k)$ with $k\ge 2$), then the braid index of the link equals the number of Seifert circles in the diagram. Links obtained this include reduced alternating closed braids, for which we gave an alternative proof in \cite{LDH}. The Seifert graph of such a link diagram is a single path. 

For two Seifert circles of a link diagram $D$ sharing one single crossing, call them a {\em mergeable pair} since the operation used in the proof of Theorem \ref{main2} merges them into one Seifert circle. Two mergeable pairs are said to have distance $k$ if the shortest path in $G_S(D)$ from any vertex corresponding to a Seifert circle in one mergeable pair to that a vertex corresponding to a Seifert circle in the other mergeable pair contains $k$ edges. Since the operation in the proof of Theorem \ref{main2} used to combine a mergeable pair of Seifert circles does not affect the mergeable pairs that are of a distance $k\ge 2$ away, this operation can be applied to these ``far away mergeable pairs", which gives us the following more general result about a link diagram that is not necessarily alternating.

\begin{theorem}\label{main3}
If a link diagram $D$ contains $m$ mergeable pairs such that each pair is of a distance at least 2 from the rest, then the braid index of $D$ is at most $n-m$, where $n$ is the number of Seifert circles of $D$.
\end{theorem}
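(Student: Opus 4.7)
The plan is to apply the Seifert-circle merging operation from the proof of Theorem~\ref{main2} iteratively, once to each of the $m$ given mergeable pairs. Label the pairs $(C'_1,C''_1),\dots,(C'_m,C''_m)$ and set $D_0=D$. For $i=1,\dots,m$, I would form $D_i$ from $D_{i-1}$ by rerouting the overpass of the unique crossing between $C'_i$ and $C''_i$ along the other neighbors of $C''_i$, exactly as in Theorem~\ref{main2}; this replaces $C'_i$ and $C''_i$ by a single Seifert circle without changing the underlying link. Provided every step is legitimate, $D_m$ is a diagram of $L$ with exactly $n-m$ Seifert circles, and Yamada's theorem immediately yields $b(L)\le n-m$.

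The step I expect to be the main obstacle is verifying that the iteration is legitimate: after the $i$-th merging, every unprocessed pair $(C'_j,C''_j)$ with $j>i$ must still appear in $D_i$ as two distinct Seifert circles sharing exactly one crossing. To handle this I would carefully track what the $i$-th merging changes in the diagram. By inspection of the rerouting procedure, the only Seifert circles whose incident crossings are modified are $C'_i$, $C''_i$ and the neighbors of $C''_i$ in the current Seifert graph $G_S(D_{i-1})$; moreover, every new crossing introduced is between the freshly merged circle (replacing $C'_i\cup C''_i$) and some preexisting Seifert circle. In particular, no new crossing is ever created between two circles that are both distinct from $C'_i$ and $C''_i$.

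Combining this local observation with the distance-at-least-$2$ hypothesis in the original $G_S(D)$, I would prove by induction on $i$ that for every unprocessed pair ($j>i$) the circles $C'_j,C''_j$ remain untouched and share exactly one crossing in $D_i$. The verification needed at step $i+1$ has two pieces: (a) $\{C'_j,C''_j\}\cap\{C'_{i+1},C''_{i+1}\}=\emptyset$, which is immediate since distance $\ge 2$ in $G_S(D)$ rules out equality; and (b) neither $C'_j$ nor $C''_j$ is a neighbor of $C''_{i+1}$ in $G_S(D_i)$. For (b), no such crossing is present in $D$ by the distance hypothesis, and for each $k\le i$ the $k$-th merging only introduces new crossings incident to its own merged circle, which is neither $C'_j$, $C''_j$ nor $C''_{i+1}$ by distance $\ge 2$. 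Therefore the single crossing between $C'_j$ and $C''_j$ persists in $D_i$, the $(i+1)$-st merging can be carried out on $D_i$, and the induction completes the argument.
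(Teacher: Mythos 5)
Your proof is correct and follows essentially the same route as the paper, which offers only the one-line remark that the merging operation of Theorem~\ref{main2} does not affect mergeable pairs at distance at least $2$; your induction supplies the bookkeeping the paper omits. One minor imprecision: circles at distance $2$ from $C''_i$ (neighbors of the circles along which the overpass is rerouted) can also acquire new crossings with the merged circle, not only the neighbors of $C''_i$ themselves; but since every new crossing is incident to the merged circle, no new crossing can arise between $C'_j$ and $C''_j$, so your argument stands.
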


\begin{corollary}\label{Co_main3}
If a link diagram $D$ contains $m$ mergeable pairs such that each pair is of a distance at least 2 from the rest, then $a$-span$/2+1\le n-m$ for the $a$-span of $P(D,z,a)$ where $n$ is the number of Seifert circles of $D$. In particular, if the equality holds, then the braid index of $D$ is equal to $n-m$.
\end{corollary}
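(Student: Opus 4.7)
The plan is to simply chain the Morton--Franks--Williams inequality together with Theorem~\ref{main3}. Recall from the introduction that the MFW inequality (Morton's half, combined with Yamada's result) gives
\[
\frac{a\text{-span}\,P(D,z,a)}{2}+1 \le b(L)
\]
for any link diagram $D$ of $L$. On the other hand, Theorem~\ref{main3} supplies the upper bound $b(L)\le n-m$ whenever $D$ contains $m$ pairwise distance-at-least-$2$ mergeable pairs. Chaining the two bounds yields $a\text{-span}/2+1\le n-m$, which is the first assertion of the corollary.

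For the "in particular" part, I would argue as follows. Suppose the equality $a\text{-span}/2+1=n-m$ holds. Then in the chain
\[
\frac{a\text{-span}\,P(D,z,a)}{2}+1 \;\le\; b(L) \;\le\; n-m,
\]
the two outer quantities coincide, forcing $b(L)=n-m$ as well. There is essentially no obstacle here: the entire content of the corollary is the combination of Theorem~\ref{main3} (which is the new input) with the classical MFW lower bound on the braid index; no further computation with the HOMFLY polynomial or with the castle/resolving-tree machinery of Sections~\ref{s2}--\ref{s5} is required. The only thing worth double-checking is that Theorem~\ref{main3} is stated in terms of the braid index of $D$ (equivalently of the underlying link $L$), so that the MFW inequality applies to the same quantity; this is immediate from Yamada's theorem quoted in the introduction.
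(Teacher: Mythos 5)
Your proof is correct and is exactly the intended argument: the paper states this corollary without proof as an immediate consequence of chaining the MFW lower bound $a$-span$/2+1\le b(L)$ with the upper bound $b(L)\le n-m$ from Theorem~\ref{main3}. Nothing further is needed.
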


Of course, a natural next step would be identify those alternating links whose reduced alternating diagrams contain edges of weight one in their Seifert graphs, for which the equality in the above generalized MFW inequality holds. 

\smallskip
\section*{Acknowledgement}
The research of the second author was partially supported by a grant from
the Simons Foundation (\#245153 to G\'abor Hetyei).


\begin{thebibliography}{99}

\bibitem{Bir} J.~Birman and W.~Menasco
{\em Studying Links via Closed Braids, III. Classifying Links Which Are Closed 3-braids}, Pacific J. Math. \textbf{161} (1993), 25--113.


\bibitem{Crom} P.~Cromwell
{\em Homogeneous Links}, J. London Math. Soc. \textbf{39} (1989), 535--552.

\bibitem{El} E.~Elrifai, 
{\em Positive Braids and Lorenz Links}, Ph.~D. thesis, Liverpool University, 1988.

\bibitem{FW} J.~Franks and R.~Williams
{\em Braids and The Jones Polynomial}, Trans. Amer. Math. Soc., \textbf{303} (1987), 97--108.

\bibitem{Fr} P.~Freyd, D.~Yetter, J.~Hoste, W.~Lickorish, K.~Millett and A. Ocneanu
{\em A New Polynomial Invariant of Knots and Links}, Bull. Amer. Math. Soc. (N.S.) \textbf{12} (1985), 239--246.

\bibitem{Ja} F.~Jaeger
{\em Circuits Partitions and The HOMFLY Polynomial of Closed Braids}, Trans. Am. Math. Soc. \textbf{323}(1)  (1991), 449--463.

\bibitem{Jo} V.~Jones
{\em Hecke Algebra Representations of Braid Groups and Link Polynomials}, Ann. of Math. \textbf{126} (1987), 335--388.



\bibitem{K} L.~Kauffman {\em State models and the Jones
polynomial}, Topology \textbf{26}(3) (1987),  395--407.

\bibitem{Lee} S.~Lee and M.~Seo
{\em A Formula for the Braid Index of Links}, Topology Appl. \textbf{157} (2010), 247--260.



\bibitem{LDH} P.~Liu, Y.~Diao and G.~Hetyei {\em The HOMFLY Polynomial of Links in Closed Braid Forms}, preprint. 

\bibitem{Mo} H.~Morton
{\em Seifert Circles and Knot Polynomials}, Math. Proc. Cambridge Philos. Soc. \textbf{99} (1986), 107--109.

\bibitem{MS} H.~Morton and H.~Short {\em The 2-variable Polynomial of Cable Knots}, Math. Proc. Cambridge Philos. Soc. \textbf{101} (1987), 267--278. 

\bibitem {M1} K.~Murasugi {\em Jones Polynomials and Classical Conjectures in
Knot Theory}, Topology \textbf{26}(2) (1987), 187--194.

\bibitem{Mu} K.~Murasugi
{\em On The Braid Index of Alternating Links}, Trans. Amer. Math. Soc. \textbf{326} (1991), 237--260.



\bibitem{Na1} T.~Nakamura
{\em Positive Alternating Links Are Positively Alternating}, J. Knot Theory Ramifications \textbf{9} (2000), 107--112.

\bibitem{Pr} J.~Przytycki and P.~Traczyk {\em Conway Algebras and Skein Equivalence of Links}, Proc. Amer. Math. Soc. \textbf{100} (1987), 744--748. 


\bibitem{Sto} A.~Stoimenow
{\em On the Crossing Number of Positive Knots and Braids and Braid Index Criteria of Jones and Morton-Williams-Franks}, Trans. Amer. Math. Soc. \textbf{354}(2002), 3927--3954.

\bibitem{Tait} P.~Tait {\em On Knots I, II, III},  Scientific Papers, Vol. 1. London: Cambridge University Press (1990), 273--347. 

\bibitem {T} M. Thistlethwaite {\em A Spanning Tree Expansion
 of the Jones Polynomial}, Topology \textbf{26}(3) (1987), 297--309.

\bibitem{Ya} S.~Yamada
{\em The Minimal Number of Seifert Circles Equals The Braid Index of A Link}, Invent. Math. \textbf{89} (1987), 347--356.
\end{thebibliography}
\end{document}